\date{\today}
\newcommand{\Z}{{\mathbb Z}}
\newcommand{\R}{{\mathbb R}}
\newcommand{\T}{{\mathbb T}}
\newtheorem{theorem}{Theorem}
\newtheorem{lemma}{Lemma}[section]
\newtheorem{prop}[lemma]{Proposition}
\newtheorem{Lm}{Lemma}[section]
\newtheorem{Thm}[Lm]{Theorem}
\newtheorem{Prop}[Lm]{Proposition}
\newtheorem{Rem}[Lm]{Remark}
\newtheorem{Cor}{Corollary}
\newtheorem{Def}{Definition}
\def\bdef{\begin{Def}}
\def\endef{\end{Def}}
\def\bthm{\begin{Thm}}
\def\ethm{\end{Thm}}
\def\bprop{\begin{Prop}}
\def\enprop{\end{Prop}}
\def\blm{\begin{Lm}}
\def\elm{\end{Lm}}
\def\bcor{\begin{Cor}}
\def\ecor{\end{Cor}}
\def\brm{\begin{Rem}}
\def\erm{\end{Rem}}
\def\bfig{\begin{picture}}
\def\efig{\end{picture}}
\def\beq{\begin{eqnarray}}
\def\eneq{\end{eqnarray}}
\def\beal{\begin{aligned}}
\def\enal{\end{aligned}}
\begin{document}
\title[The Integrated Density of States of the Fibonacci Hamiltonian]{H\"older Continuity of the
Integrated Density of States for the Fibonacci Hamiltonian}

\author{David Damanik}

\address{Department of Mathematics, Rice University, Houston, TX~77005, USA}

\email{damanik@rice.edu}

\thanks{D.\ D.\ was supported in part by NSF grants DMS--0800100 and DMS--1067988.}

\author{Anton Gorodetski}

\address{Department of Mathematics, University of California, Irvine CA 92697, USA}

\email{asgor@math.uci.edu}

\thanks{A.\ G.\ was supported in part by NSF grants DMS--0901627 and IIS-1018433.}

\date{\today}

\begin{abstract}
We prove H\"older continuity of the integrated density of states for the Fibonacci Hamiltonian for any positive coupling, and obtain the asymptotics of the H\"older exponents for large and small couplings.
\end{abstract}

\maketitle

\section{Introduction}

In this paper we are interested in regularity properties of the integrated density of states associated with the Fibonacci Hamiltonian. We will show that it is uniformly H\"older continuous and provide explicit estimates for the H\"older exponent in terms of the coupling constant.

To motivate our study, let us consider an invertible ergodic transformation $T$ of a probability measure space $(\Omega,d\mu)$ and a bounded measurable function $f : \Omega \to \R$. One associates a family of discrete Schr\"odinger operators on the line as follows: For $\omega \in \Omega$, the potential $V_\omega : \Z \to \R$ is given by $V_{\omega}(n) = f(T^n \omega)$ and the operator
$H_\omega$ in $\ell^2(\Z)$ acts as
$$
[H_\omega \phi](n) = \phi(n+1) + \phi(n-1) + V_\omega(n) \phi(n).
$$
An important quantity associated with such a family of operators, $\{H_\omega\}_{\omega \in \Omega}$, is given by the integrated density of states, which is defined as follows; compare \cite{as,cfks}. Define the measure $dN$ by
\begin{equation}\label{e.idsspectraldef}
\int g(\lambda) dN(\lambda) = \int \langle \delta_0 , g(H_\omega) \delta_0 \rangle \, d\mu(\omega).
\end{equation}
The integrated density of states (IDS), $N$, is then given by
\begin{equation}\label{e.idsspectraldef2}
N(E) = \int \chi_{(- \infty, E]} (\lambda) \, dN(\lambda).
\end{equation}
The terminology is explained by
\begin{equation}\label{idsform}
N(E) = \lim_{n \to \infty} \frac{\# \{ \text{eigenvalues of } H_{\omega,[1,n]} \le E \}}{n} \; \; \text{ for $\mu$-a.e.\ } \omega \in \Omega,
\end{equation}
where $H_{\omega,[1,n]}$ denotes the restriction of $H_\omega$ to the interval $[1,n]$ with Dirichlet boundary conditions. It is a basic result that the IDS is always continuous \cite{as}; see \cite{ds} for a very short proof that also works in higher dimensions. Craig and Simon, \cite{cs} (see \cite{cs2} for the multi-dimensional case), have shown that more is true. Using the Thouless formula, they proved that $N$ is $\log$-H\"older continuous, that is, there is a constant $C$ such that
\begin{equation}\label{logholder}
| N(E_1) - N(E_2) | \le C \left( \log |E_1 - E_2|^{-1} \right)^{-1}
\end{equation}
for all $E_1,E_2$ with $|E_1 - E_2| \le 1/2$.

In this general setting, the bound \eqref{logholder} is essentially optimal; see Craig \cite{c} and Gan and Kr\"uger \cite{GK11}. However, for concrete models, one may hope to improve upon \eqref{logholder}. Roughly speaking, one expects stronger regularity properties of the IDS the more random the stochastic process $V_\omega(n)$ is. In the i.i.d.\ situation, it is always H\"older continuous, that is,
\begin{equation}\label{holder}
| N(E_1) - N(E_2) | \le C |E_1 - E_2|^{\gamma}
\end{equation}
for some $C < \infty$ and $\gamma > 0$, as shown by Le Page \cite{l}. If the single-site distribution is nice enough, Simon-Taylor \cite{st} and Campanino-Klein \cite{ck} proved that $N$ is $C^\infty$. We refer the reader to the survey article \cite{s1} by Simon, which describes the regularity results for the IDS that had been obtained by the mid-1980's.

More recently, there has been renewed interest in the problem of proving regularity better than \eqref{logholder} for some classes of ergodic Schr\"odinger operators with little randomness. This was initiated by Goldstein and Schlag, who proved H\"older continuity of the IDS, \eqref{holder}, for analytic quasi-periodic potentials in the regime of positive Lyapunov exponents \cite{gs}. This paper was followed by \cite{b,b2,bgs, GS08} who improved the estimate in some cases or proved regularity results for different models (e.g., with $T$ given by a skew-shift on $\T^2$). Also see Hadj-Amor \cite{H09} and Avila-Jitomirskaya \cite{AJ10} for results for analytic quasi-periodic potentials in the regime of zero Lyapunov exponents, and Schlag \cite{s4} for results for analytic quasi-periodic models at large coupling in two dimensions.

Due to the typical presence of a dense set of gaps in the spectrum, one does not hope for more than H\"older regularity for quasi-periodic (or, more generally, almost-periodic) models.

\bigskip

Our objective here is to study the regularity properties of the IDS for a prominent quasi-periodic model that is not covered by the Goldstein-Schlag paper and its successors; the Fibonacci Hamiltonian, introduced independently by Kohmoto et al.\ \cite{kkt} and Ostlund et al.\ \cite{oprss}. It is given by $\Omega = \T = \R/\Z$, $Tx = x + \alpha \mod 1$, where
$$
\alpha = \frac{\sqrt{5}-1}{2}
$$
is the inverse of the golden mean, $\mu$ is the Lebesgue measure on $\T$, and $f(\omega) = \lambda \chi_{[1-\alpha,1)}(\omega)$ for some $\lambda > 0$. Thus, the potentials have the form
\begin{equation}\label{fibpot}
V_\omega(n) = \lambda \chi_{[1-\alpha,1)}(n \alpha + \omega \mod
1).
\end{equation}
The associated operators $\{H_\omega\}_{\omega \in \Omega}$ form the family of Fibonacci Hamiltonians. This is the standard model of a one-dimensional quasicrystal. The spectrum of $H_\omega$ is easily seen to be independent of $\omega$, and it will henceforth be denoted by $\Sigma_\lambda$. This set is known to be a Cantor set of zero Lebesgue measure \cite{s5} (and in fact of Hausdorff dimension strictly between $0$ and $1$ \cite{Can}). Moreover, it is known that $H_\omega$ has purely singular continuous spectrum for every $\lambda$ and $\omega$; see Damanik and Lenz \cite{dl}, Kotani \cite{k}, and S\"ut\H{o} \cite{s3,s5}. The survey articles \cite{d, D07, D09, s3} contain information on the results obtained for this model and its generalizations. Here we are interested in the integrated density of states associates with the Fibonacci Hamiltonian, which we will henceforth denote by $N_\lambda$ since its dependence on the coupling constant $\lambda$ will be of explicit interest. We mention in passing that $dN_\lambda$ is the equilibrium measure on $\Sigma_\lambda$ in the sense of logarithmic potential theory.

\bigskip

We first note a result that is essentially well-known, but which is stated for the sake of completeness.

\begin{theorem}\label{global}
For every $\lambda > 0$, there are $C_\lambda < \infty$ and $\gamma_\lambda > 0$ such that
$$
| N_\lambda(E_1) - N_\lambda(E_2) | \le C_\lambda |E_1 - E_2|^{\gamma_\lambda}
$$
for every $E_1,E_2$ with $|E_1 - E_2| < 1$.
\end{theorem}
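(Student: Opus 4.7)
My plan is to combine standard periodic approximation with an exponential decay estimate on spectral band widths derived from hyperbolicity of the Fibonacci trace map.

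Following S\"ut\H{o} and Casdagli, I would first introduce the periodic approximants $H^{(k)}$ obtained by replacing $\alpha$ in \eqref{fibpot} by its $k$-th continued-fraction convergent $F_{k-1}/F_k$. Each $H^{(k)}$ has period $F_k$, its spectrum $\sigma_k$ is a union of at most $F_k$ Floquet bands, and its integrated density of states $N^{(k)}$ is constant between bands and increases by exactly $1/F_k$ across each band. Since $V^{(k)}_\omega(n) \to V_\omega(n)$ for almost every $\omega$, we have strong resolvent convergence $H^{(k)}_\omega \to H_\omega$ and hence weak convergence $N^{(k)} \to N_\lambda$, while S\"ut\H{o}'s combinatorial analysis yields the inclusion $\Sigma_\lambda \subset \sigma_k \cup \sigma_{k+1}$ for every $k$.

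The crucial quantitative ingredient is the uniform exponential decay of band widths: there exist $C_\lambda < \infty$ and $\rho_\lambda \in (0,1)$ such that every band of $\sigma_k$ has length at most $C_\lambda \rho_\lambda^k$. For large $\lambda$ this is the classical trace-map hyperbolicity of Casdagli on the invariant surface $S_\lambda$; for every $\lambda > 0$ it is the content of the uniform hyperbolicity theorem from \cite{Can}, together with the standard identification of spectral band edges with pre-images of the singularity set under iteration of the trace map.

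Given these two inputs the H\"older estimate becomes a short counting argument. For $E_1 < E_2$ with $\delta := |E_1 - E_2| < 1$, I would choose $k$ minimal with $C_\lambda \rho_\lambda^k \le \delta$. By the band-width estimate the interval $[E_1,E_2]$ meets at most a bounded absolute number $M$ of bands of $\sigma_k$ and of $\sigma_{k+1}$, and each such band contributes $1/F_k$ (resp.\ $1/F_{k+1}$) to the approximant IDS, so passing to the weak limit yields
\[
|N_\lambda(E_1) - N_\lambda(E_2)| \le \frac{M}{F_k}.
\]
Because $F_k \asymp \alpha^{-k}$ and $\rho_\lambda^k \asymp \delta$, this delivers the claimed bound with H\"older exponent
\[
\gamma_\lambda = \frac{-\log \alpha}{\log \rho_\lambda^{-1}} > 0.
\]
The principal obstacle is the band-width estimate at small coupling: the non-wandering set of the trace map on $S_\lambda$ degenerates as $\lambda \to 0^+$, and obtaining hyperbolicity with constants that work throughout the parameter range is the heart of the analysis in \cite{Can}; once that dynamical input is granted, the remainder is routine.
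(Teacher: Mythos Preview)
The paper's proof is entirely different and essentially one line: $dN_\lambda$ is the $\mu$-average of the spectral measures $\langle \delta_0, \chi_{(\cdot)}(H_\omega) \delta_0 \rangle$, and \cite{DKL} shows that these spectral measures are uniformly $\gamma_\lambda$-H\"older continuous with constants independent of $\omega$; averaging preserves the bound. Your band-counting approach is not this argument at all --- it is instead the template the paper uses later for Theorem~\ref{main}(a) in the regime $\lambda>4$.

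As written, however, your argument has a genuine gap at the counting step. An \emph{upper} bound $|I_k|\le C_\lambda\rho_\lambda^k$ on band widths does not imply that $[E_1,E_2]$ meets only boundedly many bands of $\sigma_k$: disjoint intervals of arbitrarily small length can all intersect a fixed interval of length $\delta$. What the argument actually needs is a uniform \emph{lower} bound on band widths, so that an interval shorter than the minimal band length can meet at most two consecutive bands; this is precisely Proposition~\ref{measure} in the paper, established there only for $\lambda>4$. A second, related issue is the step ``passing to the weak limit yields $|N_\lambda(E_1)-N_\lambda(E_2)|\le M/F_k$'': weak convergence of $N^{(n)}$ to $N_\lambda$ is a limit in $n$, while your $k$ is fixed by $\delta$, so a bound on how many level-$k$ bands meet $[E_1,E_2]$ does not by itself control $N_\lambda(E_2)-N_\lambda(E_1)$. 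In the paper this is handled via \eqref{diff} together with Lemma~\ref{l2}, which counts level-$n$ bands inside each level-$k$ band and then sends $n\to\infty$; that combinatorics again rests on the $\lambda>4$ hierarchy of Proposition~\ref{order}. With a two-sided band-width estimate and this hierarchical counting your scheme recovers Theorem~\ref{main}(a), but it does not yield Theorem~\ref{global} for all $\lambda>0$ without further input.
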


\begin{proof}
It follows from the definition \eqref{e.idsspectraldef}--\eqref{e.idsspectraldef2} that the integrated density of states is the distribution function of the $\mu$-average of the spectral measures with respect to $H_\omega$ and $\delta_0$. It was shown in \cite{DKL} that for each $\lambda$, these spectral measures are uniformly H\"older continuous with constants $C_\lambda < \infty$ and $\gamma_\lambda > 0$ that are uniform in $\omega$. That is, the $\mu$-average of these measures will also be uniformly H\"older continuous with the same pair of constants.
\end{proof}

One can infer explicit expressions for $C_\lambda$ and $\gamma_\lambda$ from \cite{DKL}. However, they are clearly far from optimal and hence we opted not to make them explicit.

\bigskip

Our main goal is to identify the asymptotic behavior of the H\"older exponent in the regimes of large and small coupling. As mentioned above, the tools used to establish Theorem~\ref{global} do not produce optimal results and hence are inadequate to identify the asymptotic behavior precisely. Therefore, different methods are needed in these asymptotic regimes, and we will indeed use different ones in either of these two cases.

\bigskip

In the large coupling regime, we have the following:

\begin{theorem}\label{main}
{\rm (a)} Suppose $\lambda > 4$. Then, for every
$$
\gamma < \frac{3\log(\alpha^{-1})}{2\log(2\lambda + 22)},
$$
there is some $\delta > 0$ such that the IDS associated with the family of Fibonacci Hamiltonians satisfies
$$
| N_\lambda(E_1) - N_\lambda(E_2) | \le |E_1 - E_2|^{\gamma}
$$
for every $E_1,E_2$ with $|E_1 - E_2| < \delta$.
\\[1mm]
{\rm (b)} Suppose $\lambda \ge 8$. Then, for every
$$
\tilde \gamma > \frac{3\log(\alpha^{-1})}{2\log \left(\frac{1}{2} \left( (\lambda - 4) + \sqrt{(\lambda - 4)^2 - 12} \right)\right)}
$$
and every $0 < \delta < 1$, there are $E_1,E_2$ with $0< |E_1 - E_2| < \delta$ such that
$$
| N_\lambda(E_1) - N_\lambda(E_2) | \ge |E_1 - E_2|^{\tilde \gamma}.
$$
\end{theorem}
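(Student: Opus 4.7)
The argument rests on the trace-map renormalization of the Fibonacci Hamiltonian, combined with uniform hyperbolicity of the trace map $T(x,y,z) = (2xy-z,\,x,\,y)$ on the Fricke--Vogt surface for $\lambda > 4$ (Casdagli, and the authors' earlier refinements). With $x_k(E) = \tfrac{1}{2}\,\mathrm{tr}\,M_{\lambda,k}(E)$ the normalized trace of the transfer matrix over the $k$-th Fibonacci word, the recursion $x_{k+1} = 2 x_k x_{k-1} - x_{k-2}$ is exactly one application of $T$. The $k$-th periodic approximant has spectrum $\sigma_{\lambda,k} = \{E : |x_k(E)|\le 1\}$, consisting of $F_k$ bands, each of exact mass $1/F_k$ under the IDS, and $\Sigma_\lambda \subset \sigma_{\lambda,k} \cup \sigma_{\lambda,k+1}$ for all $k$. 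Consequently, to prove (a) one bounds bandwidths from below and counts intersections of $I = [E_1,E_2]$ with bands; to prove (b) one exhibits specific narrow bands with nontrivial mass.

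For part (a), I would first bound $|\partial_E x_k|$ from above to obtain a uniform \emph{lower} bound on bandwidths. A chain-rule computation, together with S\"ut\H{o}'s a priori bound $|x_k(E)| \le \tfrac{\lambda}{2} + C$ for $E \in \Sigma_\lambda$, gives
\[
\| D T_{(x,y,z)}\| \le 2\lambda + 22
\]
on the part of phase space reached by spectral orbits. Coupled with the three-step Fibonacci covering of $\Sigma_\lambda$ by consecutive approximant spectra, and the transversality of the $\lambda$-curve $\{((E-\lambda)/2,\,E/2,\,1)\}$ to the stable foliation, this yields the bandwidth lower bound $c(2\lambda+22)^{-2k/3}$ at level $k$; the exponent $2k/3$ (rather than $k$) reflects the three-step inductive character of the Fibonacci renormalization. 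Given $I$ of length $\ell$, one picks $k$ maximal with $c(2\lambda+22)^{-2k/3} \ge \ell$, so $I$ meets $O(1)$ bands at level $k$ and $N_\lambda(I) \le C/F_k \lesssim \ell^{\gamma}$ for every $\gamma < \tfrac{3\log \alpha^{-1}}{2\log(2\lambda+22)}$.

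For part (b), I would reverse the approach and locate an explicit period-$3$ orbit of $T$ on the Fricke--Vogt surface whose unstable multiplier (eigenvalue of $DT^3$) equals
\[
\Lambda = \tfrac{1}{2}\bigl((\lambda-4)+\sqrt{(\lambda-4)^2-12}\bigr),
\]
which is real precisely for $\lambda \ge 8$. Near this orbit, bandwidths at level $k$ are of order $\Lambda^{-2k/3}$ while the corresponding mass under $N_\lambda$ is $1/F_k \sim \alpha^k$. Taking $E_1,E_2$ to be the endpoints of such a band gives $|E_1-E_2| \le \Lambda^{-2k/3}$ and $N_\lambda([E_1,E_2]) \ge \alpha^k$, so any $\tilde\gamma > \tfrac{3\log\alpha^{-1}}{2\log\Lambda}$ forces $|E_1-E_2|^{\tilde\gamma} < N_\lambda([E_1,E_2])$; since such bands exist for arbitrarily large $k$, the pairs can be chosen within any prescribed $\delta$.

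\textbf{Main obstacle.} The crux is quantitative control of the trace-map dynamics along the $\lambda$-curve sharp enough to produce the exact constants. For (a), one must bound $\|DT\|$ by $2\lambda + 22$ (not just $O(\lambda)$) on the precise set of phase-space points reached by spectral orbits and carefully convert this into the $(2\lambda+22)^{-2k/3}$ lower bound on every band. For (b), the challenge is identifying the correct period-$3$ orbit and verifying that $\Lambda$ above is the \emph{minimal} unstable multiplier among period-$3$ orbits intersecting the relevant invariant set, so that its neighborhood genuinely contains the thinnest bands per unit of mass and the bound $\tfrac{3\log\alpha^{-1}}{2\log \Lambda}$ is realized.
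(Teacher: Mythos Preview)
Your overall strategy coincides with the paper's: for (a), establish a uniform lower bound on the lengths of bands of $\sigma_k$, choose $k$ so that $|E_1-E_2|$ is just below this bound, conclude that $[E_1,E_2]$ can meet only $O(1)$ bands from $\sigma_k\cup\sigma_{k-1}$, and translate this into a mass bound of order $\alpha^k$; for (b), exhibit specific bands that are as narrow as possible relative to the $N_\lambda$-mass they carry. The paper, however, executes this entirely through the combinatorial band classification (type~A/type~B) of Raymond and Killip--Kiselev--Last together with the explicit bound on $|x_k'(E)|$ from Damanik--Tcheremchantsev, and never invokes hyperbolicity of the trace map, transversality to the stable foliation, or periodic-orbit multipliers. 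Your dynamical reformulation is morally equivalent but introduces inaccuracies at exactly the places where the constants matter.

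Two concrete issues. First, it is not true that each band of $\sigma_k$ carries mass exactly $1/F_k$ under $N_\lambda$. The paper's counting lemma shows that a type~A band of $\sigma_k$ contains $F_{n-k-2}$ bands of $\sigma_n$ while a type~B band contains $F_{n-k}$, so their $N_\lambda$-masses are $\alpha^{k+2}$ and $\alpha^k$ respectively; the distinction is harmless for (a) but must be tracked in (b), where one specifically uses a type~B band. Second, your period-$3$ orbit picture in (b) is internally inconsistent: if the unstable eigenvalue of $DT^3$ at a periodic point were $\Lambda$, then the expansion over $k$ iterates of $T$ would be $\Lambda^{k/3}$ and the associated bandwidth would scale like $\Lambda^{-k/3}$, not $\Lambda^{-2k/3}$. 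What actually happens in the paper is combinatorial: one follows the nested chain of bands with type pattern $B,A,\emptyset,B,A,\emptyset,\dots$ (period three in the band hierarchy, with the band absent at every third level), and Killip--Kiselev--Last's Lemma~5.5 gives a lower bound on $|x_k'|$ along this chain whose three-step growth factor is $\Lambda^2$, yielding $|I_k|\le 4\,\Lambda^{-2k/3}$ for $k\equiv 1\pmod 3$. Likewise, the constant $2\lambda+22$ in (a) is the output of the explicit derivative recursion in Damanik--Tcheremchantsev, not a bound on $\|DT\|$ over spectral orbits; your heuristic does not by itself produce that number.
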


This shows in particular that the optimal H\"older exponent is asymptotically $\frac{3\log(\alpha^{-1})}{2\log \lambda}$ in the large coupling regime.

Theorem~\ref{main} is proved in Subsection~\ref{ss.proofthmmain}. The proof is based on the self-similarity of the spectrum of $H_\omega$. In particular, we do not use the Thouless formula and a H\"older continuity result for the Lyapunov exponent, as was the case in many of the works mentioned above. Thus, in a sense, we use a geometric, rather than an analytic, approach. Before turning to the proof, we first recall the canonical periodic approximants, which are obtained by replacing $\alpha$ by its continued fraction approximants. This enables us to describe the self-similarity of the spectrum that is crucial to our proof and it will also establish an explicit way to express $N_\lambda(E)$ in terms of periodic spectra.

\bigskip

In the small coupling regime, we have the following:

\begin{theorem}\label{t.idsholdersmall}
The integrated density of states $N_{\lambda}(\cdot)$ is H\"older continuous with H\"older exponent $\gamma_{\lambda}$, where $\gamma_{\lambda}\to \frac{1}{2}$ as $\lambda\to 0$, and $\gamma_{\lambda}<\frac{1}{2}$ for small $\lambda>0$.

More precisely,

{\rm (a)} For any $\gamma \in (0, \frac{1}{2})$, there exists $\lambda_0 > 0$ such that for any $\lambda \in (0, \lambda_0)$, there exists $\delta>0$ such that
$$
|N_{\lambda}(E_1)-N_{\lambda}(E_2)|\le |E_1-E_2|^{\gamma}
$$
for every $E_1, E_2$ with $|E_1-E_2|<\delta$.

{\rm (b)} For any sufficiently small $\lambda > 0$, there exists $\tilde \gamma = \tilde \gamma (\lambda) < \frac{1}{2}$ such that for every $\delta > 0$, there are $E_1, E_2$ with $0<|E_1-E_2|<\delta$ and
$$
|N_{\lambda}(E_1)-N_{\lambda}(E_2)|\ge |E_1-E_2|^{\tilde\gamma}.
$$
\end{theorem}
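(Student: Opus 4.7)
The plan is to exploit the renormalization structure of $\Sigma_\lambda$ via the Fibonacci trace map, combined with the observation that as $\lambda \to 0^+$ the trace-map dynamics on the Cayley-cubic invariant surface $S_\lambda = \{(x,y,z) : x^2+y^2+z^2 - 2xyz - 1 = \lambda^2/4\}$ degenerates to an integrable system whose associated density of states coincides with that of the free Laplacian, which is exactly $\tfrac{1}{2}$-H\"older at the edges of $[-2,2]$ and better in the bulk. The target exponent $\tfrac{1}{2}$ is thus inherited from the unperturbed IDS, and the theorem should follow from a quantitative comparison of the perturbed trace-map horseshoe with its $\lambda = 0$ limit.

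For part (a), I would work through the canonical periodic approximants $H^{(k)}$ of period $F_k$: their spectra $\sigma_k$ consist of $F_k$ disjoint bands, each carrying IDS-mass $1/F_k$, and the approximant IDS's converge weakly to $N_\lambda$. Using the uniform hyperbolicity and smooth $\lambda$-dependence of the non-wandering set of the trace map restricted to $S_\lambda$ (as established in earlier Damanik--Gorodetski work), one obtains uniform lower bounds of the form $|B| \geq c(\lambda)\, r(\lambda)^{-k}$ for every band $B$ of $\sigma_k$, where $r(\lambda) \searrow \alpha^{-2}$ as $\lambda \to 0^+$. A bounded-distortion argument along the orbits of the horseshoe then upgrades this to $|N_\lambda(E_1) - N_\lambda(E_2)| \leq C\,|E_1 - E_2|^{\log(\alpha^{-1})/\log r(\lambda)}$ for $|E_1 - E_2|$ small, and the H\"older exponent tends to $\tfrac{1}{2}$ as $\lambda \to 0^+$.

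For part (b), I would exhibit a specific band edge of $\Sigma_\lambda$ at which the IDS has strictly sub-$\tfrac{1}{2}$ regularity. The natural candidate is the rightmost point $E^\ast$ of $\Sigma_\lambda$: taking $E_2 = E^\ast$ and $E_1$ the left edge of the outermost band of $\sigma_k$, the mass $N_\lambda(E_2) - N_\lambda(E_1)$ is $\asymp \alpha^k$, while $E_2 - E_1 \asymp \widetilde{r}(\lambda)^{-k}$, where $\widetilde{r}(\lambda)$ is the expansion rate of the trace map at the hyperbolic periodic orbit controlling this edge. For $\lambda > 0$ small but nonzero, one has $\widetilde{r}(\lambda) > \alpha^{-2}$ strictly, because the perturbation genuinely opens gaps adjacent to this edge; hence $\tilde\gamma(\lambda) := \log(\alpha^{-1})/\log \widetilde{r}(\lambda) < \tfrac{1}{2}$. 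The principal obstacle is establishing the uniform hyperbolicity and bounded-distortion estimates in a form that is quantitative in $\lambda$, since $S_\lambda$ and its horseshoe collapse as $\lambda \to 0^+$; one must therefore work in suitably rescaled coordinates, or on carefully chosen compact sub-pieces of $S_\lambda$, to transfer precise expansion-rate estimates from the limiting integrable system to the perturbation and obtain the asymptotic $r(\lambda) \to \alpha^{-2}$ required in part (a).
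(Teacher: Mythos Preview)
Your outline identifies the right mechanism --- the exponent $\tfrac12$ arises from the behaviour near the conic singularities of the Cayley cubic, where the relevant expansion rate is $\mu_0=\alpha^{-2}$ --- but the argument you sketch has two genuine gaps, and the paper's proof proceeds rather differently.

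\textbf{Part (a).} The band-counting route you propose (min band size $\ge c(\lambda)r(\lambda)^{-k}$, hence H\"older exponent $\log\alpha^{-1}/\log r(\lambda)$) is modelled on the large-coupling proof in Section~\ref{s.large}. That proof, however, rests on the type~A/B combinatorics of Proposition~\ref{order}, which in turn requires the three-trace bound \eqref{critical} and hence $\lambda>4$. For small $\lambda$ there is no such clean covering structure, so the passage from ``all level-$k$ bands have length $\ge r(\lambda)^{-k}$'' to ``an interval of that length meets $O(1)$ bands'' is not available. Your ``bounded-distortion argument along orbits of the horseshoe'' is precisely the missing step, and it is not a routine consequence of hyperbolicity: as $\lambda\to0$ the horseshoe $\Lambda_\lambda$ fills out all of $\mathbb{S}$ and the hyperbolicity constants degenerate near the singularities, so one cannot simply invoke uniform estimates.

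The paper avoids band counting altogether. It introduces the map $\Psi_\lambda:\Sigma_\lambda\to[0,2]$ defined by $N_\lambda(x)=N_0(\Psi_\lambda(x))$ (Proposition~\ref{p.dynpsi}) and compares, for given $E_1,E_2\in\Sigma_\lambda$, the arc $b\subset\ell_\lambda$ between $L_\lambda(E_1),L_\lambda(E_2)$ with the arc $a\subset\ell_0$ between $L_0(\Psi_\lambda(E_1)),L_0(\Psi_\lambda(E_2))$. One iterates both arcs under $T$ until they reach unit size and compares expansion factors in three regimes (away from singularities; at a singularity with one endpoint on the strong stable manifold; near a singularity with both endpoints off it). The key dynamical input is \cite[Proposition~3.15]{DG2}: near a singularity, arcs on $\ell_0$ expand at rate $\mu_0^{1/2}$ per iterate (the square root coming from the cone structure), whereas arcs on $\ell_\lambda$ expand at rate $\mu_\lambda\to\mu_0$. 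Choosing $\gamma_\lambda<\tfrac12$ with $\mu_\lambda^{2\gamma_\lambda}<\mu_0$ gives $|a|\le C|b|^{2\gamma_\lambda}$ in case~(ii) and $|a|\le C|b|^{\gamma_\lambda}$ in the other cases; combining with the explicit H\"older properties of $N_0$ (Lemma~\ref{l.IDSLap}) yields the result. The square-root in $N_0$ near $\pm2$ and the square-root loss in the expansion of $a$ combine to give exactly the exponent $\gamma_\lambda\to\tfrac12$.

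\textbf{Part (b).} Your choice $E_2=\max\Sigma_\lambda$ matches the paper's, but the assertion ``$\widetilde r(\lambda)>\alpha^{-2}$ because the perturbation genuinely opens gaps'' is not a proof: one must show that the unstable multiplier $\mu_\lambda$ of the period-2 orbit born from the singularity $(1,1,1)$ strictly exceeds $\mu_0$. The paper does this by writing the periodic curve $Per_2$ explicitly, computing the largest eigenvalue $\lambda^u(x)$ of $DT^2$ along it in closed form, and checking $\frac{d}{dx}\lambda^u(1)=0$, $\frac{d^2}{dx^2}\lambda^u(1)>0$ (Lemma~\ref{l.muin}). This computation is the heart of part~(b) and is absent from your sketch.
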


\medskip

Theorem \ref{t.idsholdersmall} is proved in Subsection~\ref{s.three}. The proof uses the trace map formalism and the dynamical properties of the Fibonacci trace map studied previously in \cite{BR, Can, Cas, DG1, DG09b, DG2, Ro}. In particular, a relation between the integrated density of states of the Fibonacci Hamiltonian and the measure of maximal entropy of the trace map was established in \cite{DG3}. The proof combines this relation with H\"older structures that appear due to hyperbolicity of the trace map in order to get explicit estimates on the H\"older exponent. To show that the obtained asymptotics of the H\"older exponent are optimal, we study the behavior of unstable multipliers of specific periodic points of the trace map.

\bigskip

We conclude this introduction with some general remarks. In \cite{DG3} we studied the scaling exponents associated with the measures $dN_\lambda$. We showed that there exists $0 < \tilde \lambda_0 \le \infty$ such that for $\lambda \in (0,\tilde \lambda_0)$, there is $d_\lambda \in (0,1)$ so that the density of states measure $dN_\lambda$ is of exact dimension $d_\lambda$, that is, for $dN_\lambda$-almost every $E \in \R$, we have
$$
\lim_{\varepsilon \downarrow 0} \frac{\log N_\lambda(E - \varepsilon , E + \varepsilon)}{\log \varepsilon} = d_\lambda.
$$
Moreover,
$$
\lim_{\lambda \downarrow 0} d_\lambda = 1.
$$
While at first sight this result and the question addressed in Theorem~\ref{t.idsholdersmall} are quite similar, note that in the latter result, one has to establish a uniform estimate in the vicinity of an arbitrarily chosen energy in $\Sigma_\lambda$, whereas in the former result, one may exclude a set of zero $dN_\lambda$ measure from the consideration. This crucial difference leads to different answers, as the different asymptotics show. Of course the scaling exponent of $dN_\lambda$ at any energy $E \in \Sigma_\lambda$ bounds the optimal global H\"older exponent of $N_\lambda$ from above, and hence one gets a one-sided estimate in this way. More specifically, the scaling exponents are worse at gap boundaries of $\Sigma_\lambda$ and essentially determine the global H\"older exponent of $N_\lambda$, but these points form a countable set and hence a set of measure zero with respect to $dN_\lambda$ since this measure is always continuous.

It is interesting to compare the large coupling asymptotics of several $\lambda$-dependent quantities. As we saw in Theorem~\ref{main}, the optimal H\"older exponent behaves asymptotically like $1.5 \cdot \frac{\log(\alpha^{-1})}{\log \lambda}$. On the other hand, it was shown in \cite{DEGT} that the Hausdorff dimension of $\Sigma_\lambda$ behaves asymptotically like $1.831\ldots \cdot \frac{\log(\alpha^{-1})}{\log \lambda}$, and it was shown in \cite{DT07} that a certain transport exponent, which measures the rate of wavepacket spreading in the time-dependent Schr\"odinger equation associated with the Fibonacci Hamitonian, behaves asymptotically like $2 \cdot \frac{\log(\alpha^{-1})}{\log \lambda}$. Similarly, in the weak coupling regime, we have that the Hausdorff dimension of $\Sigma_\lambda$ strictly exceeds the dimension $d_\lambda$ of $dN_\lambda$ (a fact that was also proven in \cite{DG3}), which in turn strictly exceeds the optimal H\"older exponent of $N_\lambda$ (since they have different asymptotic values). This shows that the strongly coupled and the weakly coupled Fibonacci Hamiltonian serve as a good source of examples demonstrating that certain quantities associated with a discrete Schr\"odinger operator need not be identical. Moreover, the three different prefactors in the large coupling asymptotics ($1.5$, $1.831\ldots$, and $2$) correspond directly, and in a quite beautiful way, to the scaling properties exhibited by $\Sigma_\lambda$ (cf.~Subsection~\ref{ss.scaling}).

Finally, determining the correct H\"older exponent is one important ingredient in a recent study of the spacings of the zeros of a certain class of orthogonal polynomials (or, equivalently, the eigenvalues of $H_{\omega,[1,n]}$ in the notation introduced above) by Kr\"uger and Simon \cite{KS} and hence our results feed into their theory.

\medskip

\noindent\textit{Acknowledgments.} It is a pleasure to thank Mark Embree for help with symbolic calculations and Helge Kr\"uger and Barry Simon for useful comments. In particular we are grateful to Helge Kr\"uger for his question about the small coupling behavior of the H\"older exponent, which prompted us to prove Theorem~\ref{t.idsholdersmall}.

\section{The Large Coupling Regime}\label{s.large}

\subsection{Canonical Periodic Approximants and Scaling Properties}\label{ss.scaling}

In this subsection, we recall some known results for the Fibonacci Hamiltonian, its periodic approximants, and their spectra. The main tools we shall need in the sequel are summarized in Propositions~\ref{order}--\ref{measure2} below.

Define the sequence $(F_k)_{k \ge -1}$ of Fibonacci numbers by
$$
F_{-1} = 0, \; F_0 = 1, \; F_{k} = F_{k-1} + F_{k-2} \mbox{ for } k \ge 1.
$$
For $k \ge 1$, define
$$
x_k (E,\lambda) = \frac{1}{2}{\rm tr} \, M_k(E, \lambda),
$$
where
$$
M_k(E,\lambda) = \left( \begin{array}{cr} E - V_{\omega = 0}(F_k) &  -1 \\ 1 & 0 \end{array} \right) \times \cdots \times \left( \begin{array}{cr} E - V_{\omega = 0}(1) &  -1 \\ 1 & 0 \end{array} \right),
$$
that is, the transfer matrix for phase $\omega = 0$ and energy $E$ from the origin to the site $F_k$.

The matrices $M_k$ obey the recursion
\begin{equation}\label{matrec}
M_k(E,\lambda) = M_{k-2}(E,\lambda) M_{k-1}(E,\lambda),
\end{equation}
and as a consequence, 
\begin{equation}\label{tracemap}
x_{k+1}(E,\lambda) = 2 x_{k}(E,\lambda) x_{k-1}(E,\lambda) - x_{k-2}(E,\lambda).
\end{equation}
With the definitions above, \eqref{matrec} and \eqref{tracemap} hold for $k \ge 3$. If we define
$$
M_{-1}(E,\lambda) = \left( \begin{array}{cr} 1 &  -\lambda \\ 0 & 1 \end{array} \right) \text{ and } M_{0}(E,\lambda) = \left( \begin{array}{cr} E &  -1 \\
1 & 0 \end{array} \right),
$$
these recursions extend to all $k \ge 1$.

The so-called trace map relation, \eqref{tracemap}, yields the invariant
\begin{equation}\label{invariant}
x_{k+1}(E,\lambda)^2 + x_k(E,\lambda)^2 + x_{k-1}(E,\lambda)^2 - 2 x_{k+1}(E,\lambda) x_k(E,\lambda) x_{k-1}(E,\lambda) = 1 +
\frac{\lambda^2}{4}.
\end{equation}
The identities \eqref{matrec}--\eqref{invariant} were proved by S\"ut\H{o} in \cite{s3}.

For fixed $\lambda$, define (leaving the dependence on $\lambda$ implicit)
$$
\sigma_k = \{ E \in \R : |x_k (E, \lambda)| \le 1 \}.
$$
The set $\sigma_k$ is actually equal to the spectrum of the Schr\"odinger operator $H$ whose potential $V_k$ results from $V_{\omega = 0}$ in \eqref{fibpot} by replacing $\alpha$ by $F_{k-1}/F_k$ (see \cite{s3}). Hence, $V_k$ is $F_k$-periodic, $\sigma_k \subset \R$, and it consists of $F_k$ bands (closed intervals).

Next, we recall some results of Damanik and Tcheremchantsev \cite{dt}, Killip, Kiselev, and Last \cite{kkl}, and Raymond \cite{r}. From now on, we shall always assume $\lambda > 4$ since we will make critical use of the fact that in this case, it follows from the invariant, \eqref{invariant}, that three consecutive half-traces cannot simultaneously be bounded in absolute value by $1$:
\begin{equation}\label{critical}
\forall \, \lambda > 4, \; \forall \, E,k \; : \; \max \{ |x_k(E,\lambda)|, |x_{k+1}(E,\lambda)|, |x_{k+2}(E,\lambda)| \} > 1.
\end{equation}

Following \cite{kkl}, we call a band $I_k \subset \sigma_k$ a type A band if $I_k \subset \sigma_{k-1}$ (and hence $I_k \cap (\sigma_{k+1} \cup \sigma_{k-2}) = \emptyset$). A band $I_k \subset \sigma_k$ is called a type B band if $I_k \subset \sigma_{k-2}$ (and therefore $I_k \cap \sigma_{k-1} = \emptyset$); compare Figure~1.

By definition of $M_{-1}$ and $M_0$, $\sigma_{-1} = \R$, $\sigma_0 = [-2,2]$, and $\sigma_1 = [\lambda - 2, \lambda + 2]$. Hence, $\sigma_0$ consists of a single band of type A, and $\sigma_1$ consists of a single band of type B.

\begin{figure}\label{Fig1}
\begin{center}
\setlength{\unitlength}{0.175in}
\begin{picture}(20,7)(0,0)
\put(0,0){\line(1,0){8}} \put(10,0){\hbox to 0mm{\hss$k-1$\hss}}
\put(12,0){\line(1,0){8}} \put(0,2){\line(1,0){8}}
\put(10,2){\hbox to 0mm{\hss$k$\hss}} \put(12,2){\line(1,0){8}}
\put(0,4){\line(1,0){8}} \put(10,4){\hbox to 0mm{\hss$k+1$\hss}}
\put(12,4){\line(1,0){8}} \put(0,6){\line(1,0){8}}
\put(10,6){\hbox to 0mm{\hss$k+2$\hss}} \put(12,6){\line(1,0){8}}
\linethickness{2pt} \put(1,0){\line(1,0){6}}
\put(2,2){\line(1,0){4}} \put(3,6){\line(1,0){2}}
\put(12.5,2){\line(1,0){7}} \put(14.5,4){\line(1,0){3}}
\put(13,6){\line(1,0){1}} \put(18,6){\line(1,0){1}}
\end{picture}
\end{center}
\caption{left: a type A band in $\sigma_k$; right: a type B band
in $\sigma_k$.}
\end{figure}
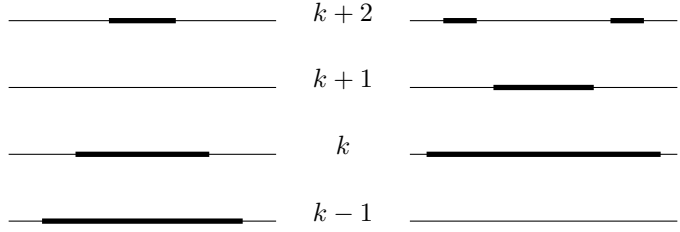

From \eqref{critical}, one gets the following result on the relative position of bands on successive levels (cf.~\cite[Lemma~5.3]{kkl} and \cite[Lemma~6.1]{r}):

\begin{prop}\label{order}
For every $\lambda > 4$ and every $k \ge 0$,
\begin{itemize}

\item[{\rm (a)}] Every type A band $I_k \subset \sigma_k$ contains exactly one type B band $I_{k+2} \subset \sigma_{k+2}$, and no other bands from $\sigma_{k+1}$, $\sigma_{k+2}$.

\item[{\rm (b)}] Every type B band $I_k \subset \sigma_k$ contains exactly one type A band $I_{k+1} \subset \sigma_{k+1}$ and two type B bands from
$\sigma_{k+2}$, positioned around $I_{k+1}$.

\end{itemize}
\end{prop}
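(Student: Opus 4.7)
The plan is to track the half-traces $x_{k+1}$ and $x_{k+2}$ across a given band $I_k\subset\sigma_k$, using the trace-map recursion \eqref{tracemap}, S\"ut\H{o}'s invariant \eqref{invariant}, and the non-coexistence bound \eqref{critical}. Throughout I rely on the Chebyshev-type fact that on each band of $\sigma_j$, the half-trace $x_j(\cdot,\lambda)$ is strictly monotone, running from $\pm 1$ to $\mp 1$; in particular the endpoints of $I_k$ satisfy $|x_k|=1$, and the defining conditions of type A ($I_k\subset\sigma_{k-1}$) and type B ($I_k\subset\sigma_{k-2}$) become $|x_{k-1}|\le 1$ and $|x_{k-2}|\le 1$, respectively, on all of $I_k$.

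For part (a), fix a type A band $I_k$. Applying \eqref{critical} to $(k-1,k,k+1)$ immediately gives $|x_{k+1}|>1$ on $I_k$, so $I_k\cap\sigma_{k+1}=\emptyset$ and no band of $\sigma_{k+1}$ lies in $I_k$. For level $k+2$, I use $x_{k+2}=2x_{k+1}x_k-x_{k-1}$: at each endpoint of $I_k$ one has $x_k=\pm 1$, and solving \eqref{invariant} as a quadratic in $x_{k+1}$ yields $x_{k+1}=x_kx_{k-1}\pm\lambda/2$; substituting gives $x_{k+2}=x_{k-1}\pm\lambda$, so $|x_{k+2}|\ge\lambda-1>3$ at both endpoints for $\lambda>4$. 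At the unique point $E_0\in I_k$ where $x_k(E_0)=0$, one instead has $x_{k+2}(E_0)=-x_{k-1}(E_0)\in[-1,1]$. Hence $\{E\in I_k:|x_{k+2}(E)|\le 1\}$ is a nonempty subset of the interior of $I_k$, and every connected component is a full band of $\sigma_{k+2}$ contained in $\sigma_k\setminus\sigma_{k+1}$, i.e.\ a type B band.

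For part (b), fix a type B band $I_k$. Now \eqref{critical} applied to $(k-2,k-1,k)$ forces $|x_{k-1}|>1$ on $I_k$. Using $x_{k+1}=2x_{k-1}x_k-x_{k-2}$, the endpoints of $I_k$ (where $|x_k|=1$) give $|x_{k+1}|\ge 2|x_{k-1}|-|x_{k-2}|>1$, while at the interior point $E_0$ with $x_k(E_0)=0$ one has $x_{k+1}(E_0)=-x_{k-2}(E_0)\in[-1,1]$; this produces at least one subinterval $I_{k+1}\subset I_k$ with $|x_{k+1}|\le 1$, which is a band of $\sigma_{k+1}$ sitting in $\sigma_k$ and hence of type A. On $I_{k+1}$ itself, \eqref{critical} applied to $(k,k+1,k+2)$ together with $|x_k|,|x_{k+1}|\le 1$ forces $|x_{k+2}|>1$, so bands of $\sigma_{k+2}$ inside $I_k$ must live in the two components of $I_k\setminus I_{k+1}$. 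On each such flanking component, the same invariant-based computation at the outer endpoint of $I_k$ (where $|x_k|=1$) and the adjacent boundary of $I_{k+1}$ (where $|x_{k+1}|=1$) yields $|x_{k+2}|\ge\lambda/2-1>1$ at both ends, while tracking the sign of $x_{k+2}=2x_{k+1}x_k-x_{k-1}$ across the component and applying the intermediate value theorem supplies at least one interior point where $|x_{k+2}|\le 1$; each connected component of $\{|x_{k+2}|\le 1\}$ is then a band of $\sigma_{k+2}$ disjoint from $\sigma_{k+1}$, hence of type B.

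The principal obstacle is upgrading these ``at least'' counts to the claimed exact counts (``exactly one'' in (a), ``exactly one and exactly two'' in (b)); a priori, the polynomials $x_{k+1}$ and $x_{k+2}$ could cross $\pm 1$ more times than accounted for. I plan to close the gap by a simultaneous induction on $k$ that tracks the numbers $a_k,b_k$ of type A and type B bands of $\sigma_k$. The existence arguments above yield the lower bounds $a_{k+1}\ge b_k$ and $b_{k+2}\ge a_k+2b_k$, while the global count $\#\sigma_j=F_j$ together with the Fibonacci identity $F_{k+2}=F_{k+1}+F_k$ forces these inequalities to be equalities at every level. This pins the local counts inside every $I_k$ to exactly the values asserted in (a) and (b), and simultaneously locates the two type B bands of part (b) symmetrically on either side of the central type A band.
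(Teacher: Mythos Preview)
The paper does not actually prove this proposition; it merely records it as a consequence of \eqref{critical} and cites \cite[Lemma~5.3]{kkl} and \cite[Lemma~6.1]{r}.  Your plan follows the same route as those references (endpoint/midpoint evaluations of the half-traces via the invariant, followed by a counting argument), and the overall architecture is sound.  Two steps, however, are not justified as written.

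First, your counting step invokes ``the global count $\#\sigma_j=F_j$'' as an input, but the equality (all gaps open) is precisely one of the facts that Proposition~\ref{order} and Lemma~\ref{l1} together establish; assuming it here is circular.  What is available a priori is only the Floquet upper bound $\#\sigma_j\le F_j$.  That is enough: since type~A and type~B are mutually exclusive by \eqref{critical}, one has $a_j+b_j\le\#\sigma_j\le F_j$, and your lower bounds $a_{k+2}\ge b_{k+1}$, $b_{k+2}\ge a_k+2b_k$ inductively force $a_{k+2}+b_{k+2}\ge F_k+F_{k+1}=F_{k+2}$, hence equality everywhere and the exact local counts.  You should rephrase the last paragraph accordingly.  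Second, your ``at least two type~B bands'' argument in~(b) presupposes a \emph{single} $I_{k+1}$ (speaking of ``the two flanking components'') before that uniqueness has been established, and the promised sign-tracking is not carried out.  A clean repair avoids $I_{k+1}$ altogether: with, say, $x_{k-1}>1$ on $I_k$, the invariant at the two endpoints of $I_k$ (where $|x_k|=1$) pins down $x_{k-1}=\pm x_{k-2}+\lambda/2$ and then $x_{k+2}=\pm x_{k-2}+3\lambda/2>1$ at both endpoints, whereas at the interior point $E_0$ with $x_k(E_0)=0$ one has $x_{k+2}(E_0)=-x_{k-1}(E_0)<-1$.  Thus $x_{k+2}$ runs $>1,\;<-1,\;>1$ across $I_k$ and must enter $[-1,1]$ at least once on each side of $E_0$, producing two type~B bands directly; the counting then upgrades all the ``at least'' counts to ``exactly'' and, since $E_0\in\sigma_{k+1}$ and $|x_{k+2}|>1$ on $\sigma_{k+1}\cap I_k$, also yields the ``positioned around $I_{k+1}$'' clause.
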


The Lebesgue measure of each individual band admits the following geometric (in $k$) lower bound:

\begin{prop}\label{measure}
For $\lambda > 4$, $k \ge 3$, and every band $I_k$ of $\sigma_k$, we have
$$
|I_k| \ge \frac{4}{(2\lambda + 22)^{2k/3}},
$$
where $| \cdot |$ denotes Lebesgue measure.
\end{prop}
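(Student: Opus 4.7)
The plan is to use the Mean Value Theorem to reduce the band-width lower bound to a derivative upper bound, and then to establish the derivative bound inductively via the trace map.

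First, on each band $I_k$ of the spectrum of the $F_k$-periodic approximant, the polynomial $p_k := \mathrm{tr}\,M_k(\cdot,\lambda) = 2 x_k$ is monotone and takes the values $+2$ and $-2$ at the two endpoints of $I_k$ (this is standard Floquet theory for periodic Jacobi operators). Hence by the Mean Value Theorem,
$$
|I_k| \;\ge\; \frac{4}{\sup_{E\in I_k}|p_k'(E)|},
$$
so the proposition reduces to establishing
$$
\sup_{E\in I_k}|p_k'(E)| \;\le\; (2\lambda+22)^{2k/3} \qquad (k\ge 3).
$$

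I would prove this derivative bound by induction on $k$, using the differentiated trace map
$$
x_{k+1}' \;=\; 2 x_k' \, x_{k-1} + 2 x_k \, x_{k-1}' - x_{k-2}'
$$
obtained from \eqref{tracemap}. For $E \in I_{k+1}$, the nesting of Proposition~\ref{order} (applied iteratively) determines, via the type A/B labeling of the ancestor bands, exactly which earlier spectra $\sigma_j$ contain $E$; this pattern is periodic of period three, with precisely one ``bad'' level (where $|x_j(E)| > 1$) in any three consecutive indices. At the bad level, solving the invariant \eqref{invariant} as a quadratic, together with \eqref{critical} and $\lambda > 4$, yields the explicit value bound $|x_j(E)| \le 1 + \sqrt{1+\lambda^2/4}$, while at the good levels we have $|x_j(E)| \le 1$ automatically. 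Feeding these value bounds into the recursion for $x'$ and iterating three times produces a per-three-level multiplicative constant of order $(2\lambda+22)^2$---i.e., a per-level growth rate of $(2\lambda+22)^{2/3}$, as required. Base cases for small $k$ follow by direct computation from the explicit forms of $M_{-1}$ and $M_0$.

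The main technical obstacle is the bookkeeping required to close the induction. At the bad levels the straightforward inductive hypothesis (a uniform derivative bound on $\sigma_j$) does not directly bound $|x_j'(E)|$, since $E \notin \sigma_j$ there; an auxiliary estimate valid on the enclosing ancestor band is needed. One natural way to obtain such an estimate is to invert the trace map and write $x_{k-2}' = 2 x_k' x_{k-1} + 2 x_k x_{k-1}' - x_{k+1}'$, expressing a bad-level derivative in terms of good-level ones. Carrying out the resulting case analysis over both the type A and type B branches, and verifying that each three-step cycle contributes no more than $(2\lambda+22)^2$, completes the argument.
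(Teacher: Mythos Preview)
Your reduction via the Mean Value Theorem is correct and matches the paper: the paper uses $\int_{I_k}|x'_k(E)|\,dE=2$ and then cites \cite[Lemma~3.5]{dt} for the derivative bound, so your outline amounts to re-proving the cited lemma, and the inductive approach through the differentiated trace recursion is indeed the natural route.

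The gap is in your description of the combinatorics. The good/bad pattern along the ancestry of a band is \emph{not} always periodic of period three with exactly one bad level per window. By Proposition~\ref{order}(b), a type~B band at level $k$ may have a type~B parent at level $k-2$, and iterating gives an all-type-B ancestry with $E\in\sigma_k,\sigma_{k-2},\sigma_{k-4},\ldots$ and $E\notin\sigma_{k-1},\sigma_{k-3},\ldots$; here the window $\{k-3,k-2,k-1\}$ contains two bad levels, and in general the density of bad levels ranges between $1/3$ and $1/2$ depending on the type sequence. What survives in general is only that each bad level is flanked by two good ones, so your invariant-based value bound $|x_j|\le 1+\sqrt{1+\lambda^2/4}$ at bad levels is valid. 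Fortunately the period-three ``B,\,A,\,not-in'' ancestry (the one singled out in the proof of Proposition~\ref{measure2}) is actually the worst case and produces the per-level rate of order $\lambda^{2/3}$; the all-type-B ancestry yields only order $\lambda^{1/2}$. So the strategy can be completed, but the induction must be carried over \emph{all} admissible type sequences rather than the single period-three pattern you assume; your proposed ``inverted'' recursion does not help with this, since it introduces $x'_{k+1}$, which lies beyond the band under consideration.
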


\begin{proof}
This follows from \cite[Lemma~3.5]{dt}, noting \eqref{critical} and the fact that on each band $I_k$ of $\sigma_k$, we have $\int_{I_k} |x'_k(E)| \, dE = 2$.\footnote{Here we caution the reader that the literature on the Fibonacci Hamiltonian is not consistent in the sense that some papers use half-traces, as we do here, and other papers use traces instead. The papers \cite{dt, kkl, r}, on which much of the present subsection is based, belong to the second group and hence their results need to be slightly reformulated when given here.}
\end{proof}

In the large coupling regime, this result is asymptotically optimal in the following sense:

\begin{prop}\label{measure2}
For $\lambda \ge 8$, $k \ge 4$ and $k \equiv 1 \mod 3$, there exists a type B band $I_k$ of $\sigma_k$ with
$$
|I_k| \le \frac{4}{\left(\frac{1}{2} \left( (\lambda - 4) + \sqrt{(\lambda - 4)^2 - 12} \right)\right)^{2k/3}}.
$$
\end{prop}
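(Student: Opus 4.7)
The plan is to derive the upper bound from a matching lower bound on the derivative of $x_k$, dual to the proof of Proposition~\ref{measure}. Recall the integral identity (used in that proof)
$$
\int_{I_k} |x_k'(E)|\,dE = 2,
$$
from which
$$
|I_k| \;\le\; \frac{2}{\min_{E \in I_k} |x_k'(E)|}.
$$
Writing $\mu_\lambda := \tfrac{1}{2}((\lambda-4) + \sqrt{(\lambda-4)^2 - 12})$, the proposition will follow once a type B band $I_k$ is exhibited on which $\min_{E \in I_k} |x_k'(E)| \ge \tfrac{1}{2}\,\mu_\lambda^{2k/3}$.

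The band $I_k$ I would take is a member of a nested sequence $I_1 \supset I_4 \supset I_7 \supset \cdots$ of type B bands constructed as follows. Starting from the unique type B band $I_1 = \sigma_1 = [\lambda-2,\lambda+2]$, Proposition~\ref{order} gives, inside any type B band at level $\ell$, a unique type A band at level $\ell+1$ (by~(b)), and inside that type A band a unique type B band at level $\ell+3$ (by~(a)); this defines $I_{3n+1} \subset I_{3(n-1)+1}$ recursively, and $I_{3n+1}$ is type B by construction. An induction using the critical inequality~\eqref{critical} shows that on $I_{3n+1}$ one has
$$
|x_j(E)| \le 1 \text{ for } j \equiv 1, 2 \pmod 3, \qquad |x_j(E)| > 1 \text{ for } j \equiv 0 \pmod 3,
$$
for $-1 \le j \le 3n+1$. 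The invariant~\eqref{invariant} applied to the triples $(x_{j-1}, x_j, x_{j+1})$ with $j \equiv 0 \pmod 3$ then yields $|x_j(E)| \ge \tfrac{1}{2}(\lambda - 4) + O(\lambda^{-1})$ on $I_{3n+1}$.

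For the derivative, I would differentiate the trace recursion~\eqref{tracemap}:
$$
x_{k+1}'(E) = 2 x_{k-1}(E)\,x_k'(E) + 2 x_k(E)\,x_{k-1}'(E) - x_{k-2}'(E),
$$
so the column vector $(x_k', x_{k-1}', x_{k-2}')^T$ evolves under a product of $3 \times 3$ transfer matrices $A_j(E)$. Since $(x_{j-1}(E), x_j(E), x_{j+1}(E))$ lies on the Fricke--Vogt surface $S_\lambda = \{x^2+y^2+z^2-2xyz = 1 + \lambda^2/4\}$ for every $E$, the derivative vector is tangent to $S_\lambda$, on which the trace map is area-preserving, so the relevant dynamics is two-dimensional. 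Along the nested sequence, the three-step composition $A_{k+2} A_{k+1} A_k$ with $k+2 \equiv 0 \pmod 3$ (so that $x_{k+2}$ is the large entry) restricts to the tangent plane to $S_\lambda$ as a $2 \times 2$ matrix with characteristic polynomial $t^2 - (\lambda-4)\,t + 3$, and hence dominant eigenvalue $\mu_\lambda^2$. Iterating this three-step composition $(k-1)/3$ times, and using that the initial condition $x_0'(E) = x_1'(E) = 1/2$ has nonzero component along the unstable direction, should yield $|x_k'(E)| \ge \tfrac{1}{2}\,\mu_\lambda^{2k/3}$ uniformly on $I_k$.

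The hard part will be the eigenvalue computation that produces exactly the polynomial $t^2 - (\lambda-4)\,t + 3$. A crude leading-order expansion of $A_{k+2} A_{k+1} A_k$ for large $|x_{k+2}|$ gives a matrix of approximate rank one with dominant entry $\sim 4 x_{k+2}^2$; using only the bound $|x_{k+2}| \ge \lambda/2 - 1$ would yield an asymptotically weaker estimate. Obtaining the sharp constant $\mu_\lambda$ requires retaining the subleading corrections in the matrix product and recognizing that the three-fold iteration of the trace map along the nested bands converges to a hyperbolic periodic orbit of $T$ whose monodromy (on the tangent space to $S_\lambda$) has exactly the stated characteristic polynomial. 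Once the sharp derivative estimate is secured uniformly on $I_k$, the integral identity above gives $|I_k| \le 4/\mu_\lambda^{2k/3}$, as claimed.
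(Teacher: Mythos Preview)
Your overall strategy coincides with the paper's: build the nested sequence $I_1 \supset I_4 \supset I_7 \supset \cdots$ of type~B bands by cycling B, A, ``not contained'' via Proposition~\ref{order}, and convert the integral identity $\int_{I_k}|x_k'|\,dE=2$ into $|I_k|\le 2/\min_{I_k}|x_k'|$. The paper does not itself prove the required lower bound $|x_k'|\ge \tfrac12\,\mu_\lambda^{2k/3}$; it simply invokes \cite[Lemma~5.5]{kkl} together with the band pattern. So at the level of outline you are doing exactly what the paper does.

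Where you try to go beyond the paper and actually supply the derivative estimate, the argument has a genuine gap. The assertion that the three-step product $A_{k+2}A_{k+1}A_k$, restricted to the tangent plane of $S_\lambda$, has characteristic polynomial $t^2-(\lambda-4)t+3$ and ``hence dominant eigenvalue $\mu_\lambda^2$'' is internally inconsistent: the roots of that polynomial are $\mu_\lambda$ and $3/\mu_\lambda$, not $\mu_\lambda^{\pm 2}$, and their product is $3$, whereas $T_\lambda$ preserves (up to sign) the natural area form on $S_\lambda$, so the product of the two tangential eigenvalues of any iterate of $DT_\lambda$ must be $\pm 1$. Beyond this, the three-step matrix depends on $x_{k-1}(E),\dots,x_{k+2}(E)$ and hence on $E$, so there is no single characteristic polynomial on $I_k$; the limiting periodic-orbit picture you invoke would at best give an asymptotic rate at the single point $\bigcap_n I_{3n+1}$, not a uniform bound over the band. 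The argument in \cite{kkl} is not a monodromy computation at a periodic orbit but a direct recursive inequality for $|x_j'|$ that gains a factor $\ge \mu_\lambda$ at each index $j$ with $|x_j|\le 1$; along your nested sequence there are $2k/3$ such indices in $\{1,\dots,k\}$, and this is where the exponent $2k/3$ and the constant $\mu_\lambda$ (as the larger root of $t^2-(\lambda-4)t+3$) genuinely come from.
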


\begin{proof}
This follows from \cite[Lemma~5.5]{kkl}, Proposition~\ref{order} above (start with the type $B$ band of $\sigma_1$ and then cycle periodically through B, A, and ``not contained''), and again the fact that on each band $I_k$ of $\sigma_k$, we have $\int_{I_k} |x'_k(E)| \, dE = 2$.
\end{proof}

Let us study the scaling properties of the sets $\sigma_k$ in more detail. In particular, we want to show that, when $n > k$, each band of $\sigma_k$ contains either $F_{n-k}$ or $F_{n-k-2}$ bands of $\sigma_n$.

Recall that $\sigma_0$ consists of a single band of type A, and $\sigma_1$ consists of a single band of type B. A repeated application of Proposition~\ref{order} yields the entries in the following table:
$$
\begin{array}{|c|c|c|}

\hline

k & \# \text{ of bands in $\sigma_k$ of type A} & \# \text{ of bands in $\sigma_k$ of type B} \\

\hline

\hline

0 & 1 & 0 \\

\hline

1 & 0 & 1 \\

\hline

2 & 1 & 1 \\

\hline

3 & 1 & 2 \\

\hline

4 & 2 & 3 \\

\hline

5 & 3 & 5 \\

\hline

6 & 5 & 8 \\

\hline

\vdots & \vdots & \vdots \\

\hline

\end{array}
$$

We observe the following:

\begin{lemma}\label{l1}
For $\lambda > 4$ and $k \ge 2$, we have
\begin{align*}
\# \text{ of type A bands in $\sigma_k$} & = F_{k-2}, \\
\# \text{ of type B bands in $\sigma_k$} & = F_{k-1}.
\end{align*}
In particular, for $k \ge 0$, every band of $\sigma_k$ is either of type A or of type B.
\end{lemma}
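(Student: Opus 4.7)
My plan is to proceed by induction on $k$, letting $a_k$ and $b_k$ denote the numbers of type A and type B bands in $\sigma_k$, and using Proposition~\ref{order} to extract recurrences matching the Fibonacci recursion. The base cases $k = 2, 3$ are handled directly: $\sigma_0 = [-2,2]$ is a single type A band and $\sigma_1 = [\lambda - 2, \lambda + 2]$ is a single type B band (disjoint from $\sigma_0$ since $\lambda > 4$, and trivially contained in $\sigma_{-1} = \R$). So Proposition~\ref{order}(a) applied to the type A band of $\sigma_0$ and (b) applied to the type B band of $\sigma_1$ yield $a_2 = b_2 = 1 = F_0 = F_1$, and an analogous calculation at levels $1, 2$ gives $a_3 = 1 = F_1$ and $b_3 = 2 = F_2$.

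For the inductive step at level $k \ge 4$, I assume the statement at all smaller levels. A type A band of $\sigma_k$ is by definition a band of $\sigma_k$ contained in $\sigma_{k-1}$, hence lies inside some band of $\sigma_{k-1}$, which by the inductive hypothesis is either of type A or of type B. Proposition~\ref{order}(a) at level $k-1$ says that type A bands of $\sigma_{k-1}$ contain no band of $\sigma_k$, while (b) says each type B band of $\sigma_{k-1}$ contains exactly one (necessarily type A) band of $\sigma_k$. Thus $a_k = b_{k-1} = F_{k-2}$. The same bookkeeping one level further down, this time via (a) and (b) applied at level $k-2$, gives $b_k = a_{k-2} + 2 b_{k-2} = F_{k-4} + 2 F_{k-3} = F_{k-1}$, where the last equality is the Fibonacci identity $F_{k-4} + F_{k-3} = F_{k-2}$ together with $F_{k-3} + F_{k-2} = F_{k-1}$.

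It remains to argue that these are in fact all the bands of $\sigma_k$. Since $\sigma_k$ equals the spectrum of a periodic Jacobi matrix of period $F_k$, it consists of at most $F_k$ bands. The construction above yields $a_k + b_k = F_{k-2} + F_{k-1} = F_k$ pairwise disjoint bands of $\sigma_k$: a type A and a type B band of $\sigma_k$ cannot coincide by \eqref{critical}, which prohibits any point from lying in $\sigma_{k-2} \cap \sigma_{k-1} \cap \sigma_k$; two distinct type A bands lie in distinct (hence disjoint) type B bands of $\sigma_{k-1}$ by the inductive hypothesis; and two distinct type B bands either lie in disjoint bands of $\sigma_{k-2}$, or share a type B parent in $\sigma_{k-2}$ and are then separated by the enclosed type A band of $\sigma_{k-1}$ per Proposition~\ref{order}(b). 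Hence these $F_k$ bands exhaust $\sigma_k$, and every band is of type A or type B. The only step requiring genuine care is the disjointness verification that underpins this counting argument; the remaining work is routine manipulation of the Fibonacci recursion.
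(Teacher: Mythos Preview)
Your proposal is correct and follows the same inductive approach the paper indicates (its entire proof reads ``This is a straightforward induction''). You have simply spelled out the details, including the recurrences $a_k = b_{k-1}$ and $b_k = a_{k-2} + 2b_{k-2}$ derived from Proposition~\ref{order} and the counting/disjointness argument that closes the induction.
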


\begin{proof}
This is a straightforward induction.
\end{proof}

A similar straightforward application of Proposition~\ref{order} gives the following result.

\begin{lemma}\label{l2}
Suppose $\lambda > 4$, $k \ge 0$, and $n > k$.\\
{\rm (a)} Every type A band of $\sigma_k$ contains $F_{n-k-2}$ bands of $\sigma_n$.\\
{\rm (b)} Every type B band of $\sigma_k$ contains $F_{n-k}$ bands of $\sigma_n$.
\end{lemma}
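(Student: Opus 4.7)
The plan is to induct on $m = n - k$, proving (a) and (b) simultaneously, with Proposition~\ref{order} doing all the combinatorial work. For the base cases $m = 1, 2$ the formulas reduce to direct statements of Proposition~\ref{order}, using $F_{-1} = 0$, $F_0 = F_1 = 1$, $F_2 = 2$. The only subtlety arises for $m = 2$ in case (b): one must check that the type A band of $\sigma_{k+1}$ sitting inside a type B band of $\sigma_k$ contributes no band of $\sigma_{k+2}$, which follows from Proposition~\ref{order}(a) applied one level up, so the count of bands of $\sigma_{k+2}$ inside the type B band of $\sigma_k$ is exactly its two type B children at level $k+2$, matching $F_2 = 2$.

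The inductive step relies on the following descent observation: for every $n \ge k + 2$, every band of $\sigma_n$ is contained in some band of $\sigma_{k+1} \cup \sigma_{k+2}$. This follows by a short downward induction on $n$, since every band of $\sigma_j$ ($j \ge 2$) is either type A, and hence contained in a band of $\sigma_{j-1}$, or type B, and hence contained in a band of $\sigma_{j-2}$; finitely many such inclusions bring one down to level $k+1$ or $k+2$. Consequently, counting the bands of $\sigma_n$ inside a band $I_k$ of $\sigma_k$ reduces to summing, over the bands of $\sigma_{k+1}$ and $\sigma_{k+2}$ that lie inside $I_k$, the number of bands of $\sigma_n$ each contains.

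For $m \ge 3$, I enumerate these children using Proposition~\ref{order} and apply the inductive hypothesis. In case (a), the only child is the unique type B band $I_{k+2}$ of $\sigma_{k+2}$ inside $I_k$, which by the inductive hypothesis at level $k+2$ and distance $m-2$ contains $F_{m-2} = F_{n-k-2}$ bands of $\sigma_n$. In case (b), the children are the type A band $I_{k+1} \subset \sigma_{k+1}$ and the two type B bands of $\sigma_{k+2}$; they are pairwise disjoint because $I_{k+1}$, being of type A at level $k+1$, is disjoint from $\sigma_{k+2}$. The inductive hypothesis then contributes $F_{(m-1)-2} = F_{m-3}$ bands inside $I_{k+1}$ and $F_{m-2}$ bands inside each type B child, summing to $F_{m-3} + 2F_{m-2} = F_{m-1}+F_{m-2} = F_m = F_{n-k}$, which closes the induction.

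The only nonroutine points are the descent observation and the disjointness check in case (b); beyond these bookkeeping matters the argument reduces to the Fibonacci identity $F_{m-3}+2F_{m-2}=F_m$, so I do not anticipate any essential obstacle.
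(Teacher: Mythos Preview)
Your proof is correct and follows the same approach the paper has in mind: the paper's own proof is simply the one-line remark that Lemma~\ref{l2} is ``a similar straightforward application of Proposition~\ref{order},'' and your induction on $m=n-k$ with the Fibonacci identity $F_{m-3}+2F_{m-2}=F_m$ is exactly that application written out in full.
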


\subsection{Proof of Theorem~\ref{main}}\label{ss.proofthmmain}

In this subsection we prove Theorem~\ref{main}. Our main tools will be Proposition~\ref{measure}, Lemmas~\ref{l1} and \ref{l2}, and a formula, given in \eqref{idsbands} below, connecting the IDS of the Fibonacci Hamiltonian and the band structure of the periodic spectra introduced the previous subsection.

Let us first note that in the Fibonacci case, as a consequence of results of Hof for uniquely ergodic models, \cite{h}, the convergence in \eqref{idsform} takes place for every, rather than almost every, $\omega \in \Omega$. Moreover, Dirichlet boundary conditions can be replaced by other boundary conditions, such as Neumann or periodic boundary conditions. If we choose periodic boundary conditions, and consider $\omega = 0$ and convergence only along the subsequence $(F_n)_{n \ge 1}$, we obtain the following formula (which had already been noted by Raymond \cite{r}),
\begin{equation}\label{idsbands}
N_\lambda(E) = \lim_{n \to \infty} \frac{ \# \{ \text{bands of } \sigma_n
\le E \} }{F_n},
\end{equation}
since each band of $\sigma_n$ contains exactly one eigenvalue of the operator $H_{\omega = 0}$, restricted to $[1,F_n]$ with periodic boundary conditions.

\begin{proof}[Proof of Theorem~\ref{main}.]
(a) Given $E_1 < E_2$ (with $E_2 - E_1$ small; less than $4/(2\lambda + 22)^2$, say), we want to estimate $N_\lambda(E_2) - N_\lambda(E_1)$ from above. It follows from \eqref{idsbands} that
\begin{equation}\label{diff}
N_\lambda(E_2) - N_\lambda(E_1) =  \lim_{n \to \infty} \frac{ \# \{ \text{bands of $\sigma_n$ contained in } [E_1,E_2]  \} }{F_n}.
\end{equation}
Thus, we need to estimate from above the number of bands of $\sigma_n$ that are contained in $[E_1,E_2]$.

Let $k \ge 3$ be the integer with
$$
\frac{4}{(2\lambda + 22)^{2(k+1)/3}} \le E_2 - E_1 < \frac{4}{(2\lambda + 22)^{2k/3}}.
$$
Consider, for $n > k$, the bands of $\sigma_n$ that are contained in $[E_1,E_2]$. By the results of the previous section, each of these bands is contained in a band of $\sigma_k$ or in a band of $\sigma_{k-1}$. By the definition of $k$ and Proposition~\ref{measure}, at most two bands from $\sigma_k \cup \sigma_{k-1}$ can occur as associated bands. Thus, Lemmas~\ref{l1} and \ref{l2} imply that at most $2F_{n-(k-1)}$ bands of $\sigma_n$ can be contained in $[E_1,E_2]$.

Define $\gamma_k$ by
$$
\gamma_k = \frac{(k-3) \log (\alpha^{-1})}{\frac{2}{3}(k+1) \log( 2 \lambda + 22) - \log 4}.
$$
Then, we infer from \eqref{diff} that
\begin{align*}
N_\lambda(E_2) - N_\lambda(E_1) & =  \lim_{n \to \infty} \frac{ \# \{ \text{bands of $\sigma_n$ contained in } [E_1,E_2]  \} }{F_n}\\
& \le \lim_{n \to \infty} \frac{2 F_{n-(k-1)}}{F_n} \\
& \le \lim_{n \to \infty} \frac{F_{n-(k-3)}}{F_n} \\
& = \alpha^{k-3} \\
& = \left( \frac{4}{(2\lambda + 22)^{2(k+1)/3}} \right)^{\gamma_k} \\
& \le (E_2 - E_1)^{\gamma_k}.
\end{align*}

Now, given
$$
0 < \gamma < \frac{3\log(\alpha^{-1})}{2\log(2\lambda + 22)},
$$
choose $k_0 \ge 3$ such that $\gamma_k \ge \gamma$ for every $k \ge k_0$. Let
$$
\delta = \frac{4}{(2\lambda + 22)^{2k_0/3}} \in (0,1).
$$

We obtain that for every $E_1,E_2$ with $|E_1 - E_2| < \delta$,
$$
| N_\lambda(E_1) - N_\lambda(E_2) | \le  |E_1 - E_2|^{\gamma_k} \le |E_1 - E_2|^{\gamma},
$$
where $k \ge k_0$ is the integer associated with $|E_1 - E_2|$.

(b) Suppose $\lambda \ge 8$ and let
$$
\tilde \gamma \in \left( \frac{3\log(\alpha^{-1})}{2\log\left(\frac{1}{2} \left( (\lambda - 4) + \sqrt{(\lambda - 4)^2 - 12} \right)\right)} , 1 \right)
$$
and $0 < \delta < 1$ be given.

Choose $k_0$ such that
$$
\frac{4}{\left(\frac{1}{2} \left( (\lambda - 4) + \sqrt{(\lambda - 4)^2 - 12} \right)\right)^{2k_0/3}} < \delta
$$
and such that
$$
\tilde \gamma_k := \frac{(k+1) \log \alpha^{-1}}{\frac23 k \, \log \left( \frac{1}{2} \left( (\lambda - 4) + \sqrt{(\lambda - 4)^2 - 12} \right) \right) - \log 4} < \tilde \gamma
$$
for every $k \ge k_0$.

Next, choose $k \ge \max \{ k_0, 4 \}$ with $k \equiv 1 \mod 3$. By Proposition~\ref{measure2}, there exists a type B band $I_k$ of $\sigma_k$ with
$$
|I_k| \le \frac{4}{\left(\frac{1}{2} \left( (\lambda - 4) + \sqrt{(\lambda - 4)^2 - 12} \right)\right)^{2k/3}}.
$$
Denote the endpoints of $I_k$ by $E_1, E_2$, that is, $I_k = [E_1,E_2]$.

By \eqref{diff}, Lemma~\ref{l2}, the definition of $\tilde \gamma_k$, the choice of $I_k = [E_1,E_2]$, the choice of $k$, and the fact that $0 < E_2 - E_1 < \delta < 1$, we find that
\begin{align*}
N_\lambda(E_2) - N_\lambda(E_1) & =  \lim_{n \to \infty} \frac{ \# \{ \text{bands of $\sigma_n$ contained in } [E_1,E_2]  \} }{F_n}\\
& = \lim_{n \to \infty} \frac{F_{n-k-1}}{F_n} \\
& = \alpha^{k+1} \\
& = \left( \frac{4}{\left(\frac{1}{2} \left(
(\lambda - 4) + \sqrt{(\lambda - 4)^2 - 12} \right)\right)^{2k/3}} \right)^{\tilde \gamma_k} \\
& \ge (E_2 - E_1)^{\tilde \gamma_k} \\
& \ge (E_2 - E_1)^{\tilde \gamma}.
\end{align*}
This completes the proof.
\end{proof}

\section{The Small Coupling Regime}

Here we will use the relation between the IDS for the Fibonacci Hamiltonian and the measure of maximal entropy for the so called Trace Map associated with the discrete Schr\"odinger operator with Fibonacci potential.

\subsection{IDS for the Free Laplacian}

It is well known that $\Sigma_0 = [-2,2]$ and
\begin{equation}\label{e.freeids}
N_0(E) = \begin{cases} 0 & E \le -2 \\ \frac{1}{\pi} \arccos \left( - \frac{E}{2} \right) & -2 < E < 2 \\ 1 & E \ge 2. \end{cases}
\end{equation}

In particular, $N_0$ is H\"older continuous with H\"older exponent $1/2$. We will need a somewhat more detailed description of the continuity properties of $N_0$ (which also follows directly from the explicit form \eqref{e.freeids}).

\blm\label{l.IDSLap}
For any $\varepsilon>0$, there exists $C>0$ such that the following hold.

{\rm (a)} If $E_1, E_2\in [-2+\varepsilon, 2-\varepsilon]$, then
$$
|N_0(E_2)-N_0(E_1)|\le C|E_2-E_1|.
$$

{\rm (b)} If $E_1, E_2\in [-2, 2]$, then
$$
|N_0(E_2)-N_0(E_1)|\le C|E_2-E_1|^{1/2}.
$$

{\rm (c)} If $E_1, E_2\in [-2, -2+\varepsilon)$ and $-2<E_1<E_2$, then
$$
|N_0(E_2)-N_0(E_1)|\le \frac{C}{|2+E_1|^{1/2}}|E_2-E_1|.
$$

Similarly, if $E_1, E_2\in (2-\varepsilon, 2]$ and $E_1<E_2<2$, then
$$
|N_0(E_2)-N_0(E_1)|\le \frac{C}{|2-E_2|^{1/2}}|E_2-E_1|.
$$

{\rm (d)} There exists $C_0>0$ such that for any $E\in (-2, 2)$, one has
$$
|N_0(-2)-N_0(E)|\ge C_0|E+2|^{1/2} \ \ \ \ \text{and}\ \ \ \ |N_0(2)-N_0(E)|\ge C_0|E-2|^{1/2}.
$$
\elm

\subsection{The Trace Map}

The recursion \eqref{tracemap} gives rise to a fundamental connection between the spectral properties of the Fibonacci Hamiltonian and the dynamics of the \textit{trace map}
$$
T : \Bbb{R}^3 \to \Bbb{R}^3, \; T(x,y,z)=(2xy-z,x,y).
$$
The function $G(x,y,z) = x^2+y^2+z^2-2xyz-1$ is invariant\footnote{It is usually called the Fricke-Vogt invariant.} under the action of $T$ (cf.~\eqref{invariant}), and hence $T$ preserves the family of cubic surfaces\footnote{The surface $S_0$ is known as Cayley cubic.}
$$
S_\lambda = \left\{(x,y,z)\in \Bbb{R}^3 : x^2+y^2+z^2-2xyz=1+ \frac{\lambda^2}{4} \right\}.
$$
It is therefore natural to consider the restriction $T_{\lambda}$ of the trace map $T$ to the invariant surface $S_\lambda$. That is, $T_{\lambda}:S_\lambda \to S_\lambda$, $T_{\lambda}=T|_{S_\lambda}$. We denote by $\Lambda_{\lambda}$ the set of points in $S_\lambda$ whose full orbits under $T_{\lambda}$ are bounded (it is known that $\Lambda_\lambda$ is equal to the non-wandering set of $T_\lambda$).

\subsection{Hyperbolicity of the Trace Map}

Recall that an invariant closed set $\Lambda$ of a diffeomorphism $f : M \to M$ is \textit{hyperbolic} if there exists a splitting of the tangent space $T_xM=E^u_x\oplus E^u_x$ at every point $x\in \Lambda$ such that this splitting is invariant under $Df$, the differential $Df$ exponentially contracts vectors from the stable subspaces $\{E^s_x\}$, and the differential of the inverse, $Df^{-1}$, exponentially contracts vectors from the unstable subspaces $\{E^u_x\}$. A hyperbolic set $\Lambda$ of a diffeomorphism $f : M \to M$ is \textit{locally maximal} if there
exists a neighborhood $U$ of $\Lambda$ such that
$$
\Lambda=\bigcap_{n\in\Bbb{Z}}f^n(U).
$$
It is known that for $\lambda > 0$, $\Lambda_{\lambda}$ is a locally maximal hyperbolic set of $T_{\lambda} : S_\lambda \to S_\lambda$; see \cite{Can, Cas, DG1}.

\subsection{Properties of the Trace Map for $\lambda=0$}\label{ss.vequzero}

The surface
$$
\mathbb{S} = S_0 \cap \{ (x,y,z)\in \Bbb{R}^3 : |x|\le 1, |y|\le 1, |z|\le 1\}
$$
is homeomorphic to $S^2$, invariant under $T$, smooth everywhere except at the four points $P_1=(1,1,1)$, $P_2=(-1,-1,1)$, $P_3=(1,-1,-1)$, and $P_4=(-1,1,-1)$, where $\mathbb{S}$ has conic singularities, and the trace map $T$ restricted to $\mathbb{S}$ is a factor of the hyperbolic automorphism of $\T^2 = \R^2 / \Z^2$ given by
$$
\mathcal{A}(\theta, \varphi) = (\theta + \varphi, \theta)\ (\text{\rm mod}\ 1).
$$
The semi-conjugacy is given by the map
\begin{equation}\label{e.semiconj}
F: (\theta, \varphi) \mapsto (\cos 2\pi(\theta + \varphi), \cos 2\pi \theta, \cos 2\pi \varphi).
\end{equation}
The map $\mathcal{A}$ is hyperbolic, and is given by the matrix $A = \begin{pmatrix} 1 & 1 \\ 1 & 0 \end{pmatrix}$, which has eigenvalues
$$
\mu=\frac{1+\sqrt{5}}{2}\ \ \text{\rm and} \ \ \ -\mu^{-1}=\frac{1-\sqrt{5}}{2}.
$$

\begin{figure}[htb]
{\begin{minipage}{5cm}
\includegraphics[width=1.2\textwidth]{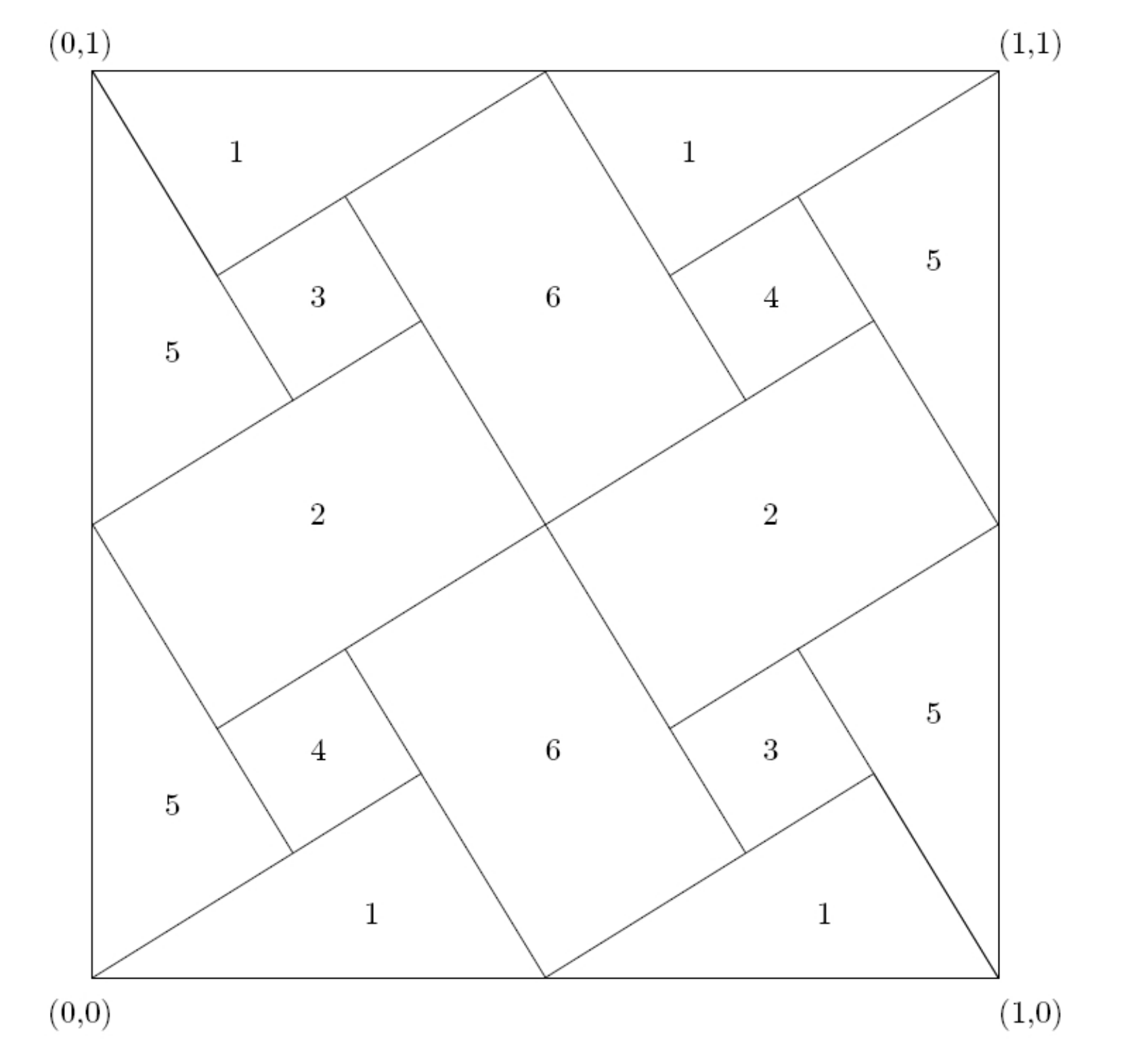}
\label{fig:sing0.0} \end{minipage} \; \; \; \; \; {\Large $\xrightarrow{F}$} \ \
\begin{minipage}{5cm}
\includegraphics[width=1.25\textwidth]{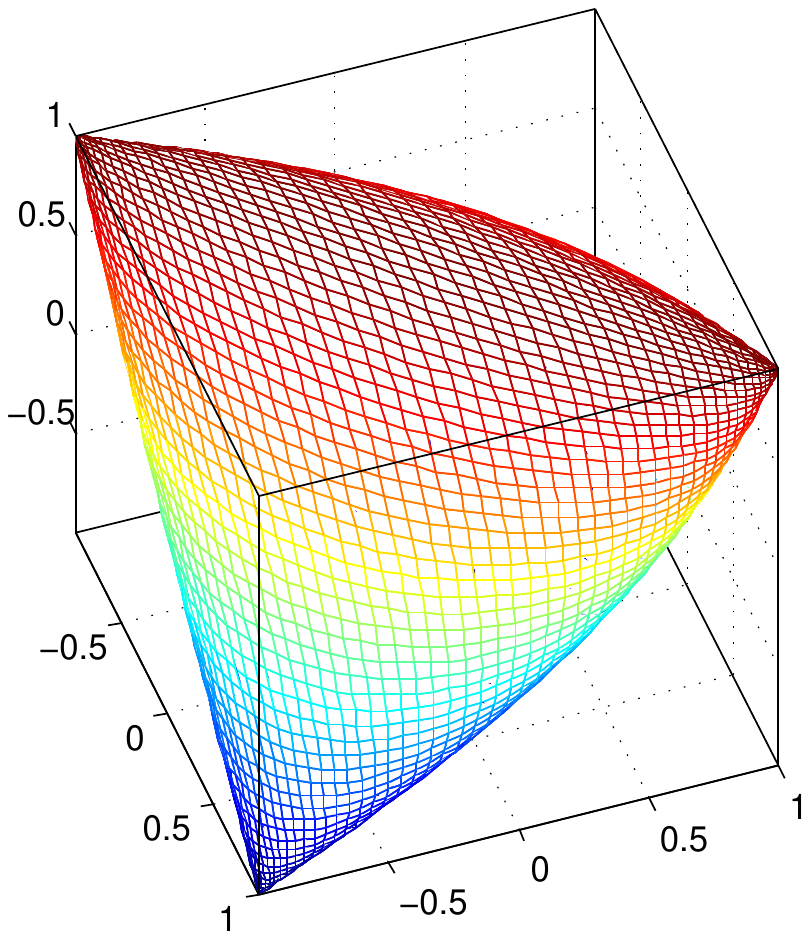}
\end{minipage}}
\caption{The semi-conjugacy $F$ between the linear map $\mathcal{A}$ and the trace map $T$ on the central part $\mathbb{S}$ of the Cayley cubic.}\label{fig.semiconj}
\end{figure}

A Markov partition for the map $\mathcal{A}:\mathbb{T}^2\to \mathbb{T}^2$ is shown in Figure~\ref{fig.semiconj}. Its image under the map $F:\mathbb{T}^2\to \mathbb{S}$ is a Markov partition for the pseudo-Anosov map $T:\mathbb{S}\to \mathbb{S}$.

\subsection{Spectrum and Trace Map}

Denote by $\ell_\lambda$ the line
$$
\ell_\lambda = \left\{ \left(\frac{E-\lambda}{2}, \frac{E}{2}, 1 \right) : E \in \Bbb{R} \right\}.
$$
It is easy to check that $\ell_\lambda \subset S_\lambda$. An energy $E \in \Bbb{R}$ belongs to the spectrum $\Sigma_\lambda$ of the Fibonacci Hamiltonian if and only if the positive semiorbit of the point $(\frac{E-\lambda}{2}, \frac{E}{2}, 1)$ under iterates of the trace map $T$ is bounded; see \cite{s3}. Moreover, the stable manifolds of points in $\Lambda_\lambda$ intersect the line $\ell_\lambda$ transversally if $\lambda > 0$ is sufficiently small \cite{DG1} or if $\lambda \ge 16$ \cite{Cas}.

Let us denote $L_{\lambda}:\mathbb{R} \to \ell_{\lambda}, L_{\lambda}(E)=\left(\frac{E-\lambda}{2}, \frac{E}{2}, 1\right)$. It is affine and contracts distances by the multiplicative factor $\frac{1}{\sqrt{2}}$.

Define the map $\Psi_{\lambda}:\Sigma_{\lambda}\to [0,2]$  by
\beq\label{e.psi}
\Psi_{\lambda}(x)=y \ \ \ \Leftrightarrow \ \ \ N_{\lambda}(x)=N_{0}(y).
\eneq

It turns out that there is a direct relation between the map $\Psi_\lambda$ and dynamical structures of the trace map $T_\lambda$. The following statement is implicitly contained in \cite[Claim 3.2]{DG3}.

\bprop\label{p.dynpsi}
There exists $\lambda_0>0$ such that the following holds. Take any $\lambda \in [0, \lambda_0)$ and $x_1, x_2\in \Omega_\lambda$. Consider the stable manifolds $W^s(x_1)$ and $W^s(x_2)$, and take some points $p_1=W^s(x_1)\cap \ell_\lambda$ and $p_2=W^s(x_2)\cap \ell_\lambda$. When $\lambda$ changes, there are unique continuations of the points $x_1, x_2 \in \Omega_\lambda$, denote them by $x_1(\lambda), x_2(\lambda)$. The continuations of the intersections $p_i(\lambda) = W^s(x_i(\lambda))\cap \ell_\lambda$, $i=1,2$, are also well defined, and the value of the difference
$
N_\lambda(L_{\lambda}^{-1}(p_2(\lambda))-N_\lambda(L_{\lambda}^{-1}(p_1(\lambda))
$
is independent of $\lambda\in [0, \lambda_0)$.
\enprop

Notice that Proposition \ref{p.dynpsi} gives a dynamical description of the map $\Psi_\lambda$. In \cite{DG3} this description was used to establish a relation between the IDS $N_\lambda$ and the measure of maximal entropy for $T|_{\Lambda_\lambda}$.

\subsection{H\"older Continuity of the IDS in the Small Coupling Regime}\label{s.three}

Here we prove Theorem \ref{t.idsholdersmall}.

\begin{proof}[Proof of Theorem \ref{t.idsholdersmall}.]
Choose sufficiently small neighborhoods $U(P_i)$ of the singularities $\{P_1, P_2, P_3, P_4\}$ of the Cayley cubic. Let $U^*(P_i)\subset U(P_i)$ be an essentially smaller neighborhood of the singularity $P_i$. Set $U = \bigcup_{i=1, 2, 3, 4}U(P_i)$ and $U^* = \bigcup_{i=1, 2, 3, 4}U^*(P_i)$. Take any $E_1, E_2 \in \Sigma_{\lambda}$, and denote by $b$ the interval on $\ell_\lambda$ between the points $L_{\lambda}(E_1)$ and $L_{\lambda}(E_2)$.

Notice that
$$
|N_{\lambda}(E_1)-N_{\lambda}(E_2)|=|N_0(\Psi_{\lambda}(E_1))-N_0(\Psi_{\lambda}(E_2))|.
$$
Let $a$ be the interval on $\ell_0$ between the points $L_{0}(\Psi_{\lambda}(E_1))$ and $L_{0}(\Psi_{\lambda}(E_2))$. We will consider separately three cases:
\begin{itemize}

\item[(i)] $a$ and $b$ are away from the neighborhoods $U^*(P_i)$ of the singularities.

\item[(ii)] $a$ and $b$ are in a neighborhood $U(P_i)$ of a singularity (that could be either $P_1=(1, 1, 1)$ or $P_2=(-1, -1, 1)$), and one of the edges of $a$ is on a local strong stable manifold of the singularity (which implies that one of the edges of $b$ is on the local strong stable manifold of a periodic orbit of period 2 or 6). This is equivalent to the case when $E_1$ or $E_2$ is equal to $\min \Sigma_\lambda$ or $\max \Sigma_\lambda$.

\item[(iii)] $a$ and $b$ are in a neighborhood $U(P_i)$ of a singularity (that could be either $P_1=(1, 1, 1)$ or $P_2=(-1, -1, 1)$), and none of the edges of $a$ is on a local strong stable manifold of the singularity.

\end{itemize}

Note that if $|E_1-E_2|$ is sufficiently small (depending on the choice of $U$ and $U^*$), then exactly one of the cases (i)--(iii) applies.

Consider the case (i). Let us iterate $a$ and $b$ until they grow up to the length of order one. To simplify the estimates, let us introduce the following notation:
$$
x\sim_C y\ \ \ \ \Leftrightarrow\ \ \ \ C^{-1}|y|\le |x|\le C|y|.
$$
There exists a constant $C>0$ that is independent of $a, b$ such that for some $M\in \mathbb{N}$, we have $|T^M(a)|\sim_C|T^M(b)|\sim_C 1$. Let us split the sequence of iterates $\{a, T(a), T^2(a), \ldots, T^M(a)\}$ into finite intervals $\{a, T(a), \ldots, T^{k_1}(a)\}$, $\{T^{k_1+1}(a), \ldots, T^{k_2}(a)\}, \ldots$ (and similarly for $\{b, T(b), T^2(b), \ldots, T^M(b)\}$) in such a way that for each $k_i$, one of the following cases holds:

 (a) $\{T^{k_{i-1}+1}(a), \ldots, T^{k_i}(a)\}$ as well as $\{T^{k_{i-1}+1}(b), \ldots, T^{k_i}(b)\}$ are away from $U^*$, and moreover,  $\frac{|T^{k_i}(a)|}{|T^{k_{i-1}}(a)|}>2$ and $\frac{|T^{k_i}(b)|}{|T^{k_{i-1}}(b)|}>2$,

 (b) $\{T^{k_{i-1}+1}(a), \ldots, T^{k_i}(a)\}$ as well as $\{T^{k_{i-1}+1}(b), \ldots, T^{k_i}(b)\}$ are inside of $U$.

  In the  case (a), since for small $\lambda>0$, the maps $T_0|_{\mathbb{S}\backslash U^*}$ and $T_{\lambda}|_{{S_{\lambda}}\backslash U^*}$ are $C^1$-close, for some $\alpha_{\lambda} < 1$ with $\alpha_{\lambda}\to 1$ as $\lambda\to 0$, we have $\frac{|T^{k_i}(a)|}{|T^{k_{i-1}}(a)|}\ge \left(\frac{|T^{k_i}(b)|}{|T^{k_{i-1}}(b)|}\right)^{\alpha_{\lambda}}$. In the case (b), by \cite[Proposition 3.15]{DG2}, we have $\frac{|T^{k_i}(a)|}{|T^{k_{i-1}}(a)|}\ge C\mu_0^{\frac{k_i-k_{i-1}}{2}}$, where $\mu_0$ is an unstable multiplier at a singularity, and at the same time we have $\frac{|T^{k_i}(b)|}{|T^{k_{i-1}}(b)|}\le \tilde C\mu_{\lambda}^{k_i-k_{i-1}}$, where $\mu_{\lambda}$ is an unstable multiplier of a periodic orbit $P_{\lambda}$, and which obeys $\mu_{\lambda}\to \mu_0$ as $\lambda\to 0$. Choose $\gamma_{\lambda}<\frac{1}{2}$, $\gamma_\lambda\to \frac{1}{2}$ as $\lambda\to 0$, in such a way that $\mu_{\lambda}^{2\gamma_\lambda}<\mu_0$. Then, (since in this case, the smallness of $U^*$ relative to $U$ guarantees that $k_i-k_{i-1}$ is large enough) we have  $\frac{|T^{k_i}(a)|}{|T^{k_{i-1}}(a)|}\ge \left(\frac{|T^{k_i}(b)|}{|T^{k_{i-1}}(b)|}\right)^{\gamma_{\lambda}}$.

Now we have
$$
1\sim_C |T^M(a)|=|a|\cdot \frac{|T^{k_1}(a)|}{|a|}\cdot \frac{|T^{k_2}(a)|}{|T^{k_1}(a)|}\cdot \ldots \cdot \frac{|T^{k_m}(a)|}{|T^{k_{m-1}}(a)|}
$$
and
$$
1\sim_C |T^M(b)|=|b|\cdot \frac{|T^{k_1}(b)|}{|b|}\cdot \frac{|T^{k_2}(b)|}{|T^{k_1}(b)|}\cdot \ldots \cdot \frac{|T^{k_m}(b)|}{|T^{k_{m-1}}(b)|}.
$$
Therefore,
\begin{multline}\label{e.ab}
 |a|=\frac{|T^M(a)|}{\prod_{j=1}^{m}\frac{|T^{k_j}(a)|}{|T^{k_{j-1}}(a)|}}\le \\ \le \frac{C}{\left(\prod_{j=1}^{m}\frac{|T^{k_j}(b)|}{|T^{k_{j-1}}(b)|}\right)^{\gamma_{\lambda}}}=\frac{C}{\left(\frac{|T^M(b)|}{|b|}\right)^{\gamma_{\lambda}}}\le C^{1+\gamma_{\lambda}}|b|^{\gamma_{\lambda}}=C'|b|^{\gamma_{\lambda}}.
\end{multline}
Since $|b|=\frac{1}{\sqrt{2}}|E_1-E_2|$ and $|a|=\frac{1}{\sqrt{2}}|\Psi_{\lambda}(E_1)-\Psi_{\lambda}(E_2)|$, using Lemma~\ref{l.IDSLap}.(a), we find
\begin{multline*}
|N_\lambda(E_1)-N_\lambda(E_2)|=|N_0(\Psi_{\lambda}(E_1))-N_0(\Psi_{\lambda}(E_2))|\le \\ \le C|\Psi_{\lambda}(E_1)-\Psi_{\lambda}(E_2)|\le C\sqrt{2}|a|\le CC'\sqrt{2}|b|^{\gamma_{\lambda}}\le C''|E_1-E_2|^{\gamma_{\lambda}}.
\end{multline*}

Now let us consider the case (ii). In this case, one of the edges of $a$ and $b$ will never leave the neighborhood $U^*$. Let us iterate $a$ and $b$ sufficiently many times to have
\beq\label{e.M}
|T^M(a)|\sim_C |T^M(b)|\sim_C 1.
\eneq
Consider the coordinate system in a neighborhood of the singularity that rectifies all the invariant manifolds (see \cite[Section 4]{DG1} or \cite[Subsection 3.2]{DG2}). The lines $\ell_0$ and $\ell_{\lambda}$ are transversal to the central-stable manifolds, hence we can apply \cite[Proposition 3.15]{DG2} with some bounded (independent of $a$ and $b$) number $k^*$ from \cite[Lemma 3.16]{DG2}. Since $\mu_{\lambda}^{2\gamma_\lambda}<\mu_0$, we have
$$
|a|\le C\mu_0^{-M}\le C\mu_{\lambda}^{-2\gamma_{\lambda}M}\le C'|b|^{2\gamma_{\lambda}}.
$$
Using Lemma \ref{l.IDSLap}.(b), we find
\begin{multline*}
|N_\lambda(E_1)-N_\lambda(E_2)|=|N_0(\Psi_{\lambda}(E_1))-N_0(\Psi_{\lambda}(E_2))|\le \\ \le C|\Psi_{\lambda}(E_1)-\Psi_{\lambda}(E_2)|^{1/2}\le  C'|E_1-E_2|^{\gamma_{\lambda}}.
\end{multline*}

Finally, let us consider the case (iii). Suppose that $E_1, E_2\in \Sigma_{\lambda}$ are such that $E_1 < E_2 < \max \Sigma_{\lambda}$ and $\Psi_{\lambda}(E_1)$ is close to 2 (the case when $\Psi_{\lambda}(E_1)$ is close to $-2$ is similar). Then, $a\subset \ell_0$, $b\subset \ell_\lambda$ are in the neighborhood $U(P_1)$ of $P_1=(1, 1, 1)$, and the distance from $a$ to the central-stable manifold is of order $\mu_0^{-s}$, where $s$ is a number of iterates needed for $a$ to leave $U(P_1)$; see \cite[Proposition 3.14]{DG2}. This implies that
$$
|2-\Psi_\lambda(E_2)|\sim_C\mu_0^{-s}.
$$
Consider the arcs $\tilde a=T^s(a)$ and $\tilde b=T^s(b)$. These arcs are away from $U$, and therefore from (\ref{e.ab}) we deduce that $|\tilde a|\le C|\tilde b|^{\gamma_{\lambda}}$.
On the other hand, we have
$$
|\tilde a|\sim_C\mu_0^s|a|, \ \ \ |\tilde b|\sim_C\mu_\lambda^s|b|,
$$
and therefore 
\begin{multline*}
    |a|\le C\mu_0^{-s}|\tilde a|\le C^2\mu_0^{-s}|\tilde b|^{\gamma_{\lambda}}\le C^{2+\gamma_{\lambda}}\mu_0^{-s}\mu_{\lambda}^{s\gamma_{\lambda}}|b|^{\gamma_\lambda} = C^{2+\gamma_{\lambda}}\mu_0^{-\frac{s}{2}}\left(\frac{\mu_{\lambda}^{\gamma_\lambda}}{\mu_0^{1/2}}\right)^s|b|^{\gamma_\lambda}\le \\
    \le C'\mu_0^{-\frac{s}{2}}|b|^{\gamma_\lambda}.
\end{multline*}

Now, using Lemma \ref{l.IDSLap}.(c), we find
\begin{multline*}
|N_\lambda(E_1)-N_\lambda(E_2)|=|N_0(\Psi_{\lambda}(E_1))-N_0(\Psi_{\lambda}(E_2))|\le \\ \le \frac{C}{|2-\Psi_\lambda(E_2)|^{\frac{1}{2}}}|\Psi_{\lambda}(E_1)-\Psi_{\lambda}(E_2)|\le \frac{C^2C'}{\mu_0^{-\frac{s}{2}}}\mu_0^{-\frac{s}{2}}|b|^{\gamma_\lambda}\le C''|E_1-E_2|^{\gamma_{\lambda}}.
\end{multline*}
Finally, notice that in all cases (i), (ii), (iii), the constant in the inequality can be set to $1$ if one takes a slightly smaller H\"older exponent and sufficiently small $|E_1-E_2|$. This finishes the proof of part (a) of Theorem \ref{t.idsholdersmall}.

In order to show part (b) of Theorem \ref{t.idsholdersmall}, consider the periodic points of period $2$ that are born out of the singularity $(1, 1, 1)$. The strong stable manifolds of these points correspond to the boundaries of the gaps in the spectrum. These periodic points form a curve
$$
Per_2=\left\{(x, y, z) : x \in \left(-\infty, \frac{1}{2}\right)\cup \left(\frac{1}{2}, \infty\right), y=\frac{x}{2x-1}, z=x, \right\}.
$$
For the map $T^2$, these points are fixed points, and $DT^2(1, 1, 1)=\begin{pmatrix}
                                                                      6 & 3 & -2 \\
                                                                      2 & 2 & -1 \\
                                                                      1 & 0 & 0 \\
                                                                    \end{pmatrix}$, with eigenvalues $1$ and $\frac{7\pm 3\sqrt{5}}{2}$.

\blm\label{l.muin}
For $\lambda > 0$ sufficiently small, the largest eigenvalue of $DT^2$ at the periodic point of period 2 near the singularity $(1, 1, 1)$ is strictly larger than the largest eigenvalue of $DT^2(1, 1, 1)$.
\elm

\begin{proof}
Take a periodic point $\left(x, \frac{x}{2x-1}, x \right)\in Per_2$.  We have
$$
DT^2\left(x, \frac{x}{2x-1}, x \right)=\begin{pmatrix}
                                         \frac{2x(4x-1)}{2x-1} & \frac{4x-1}{(2x-1)^2} & \frac{2x}{1-2x} \\
                                         2x & \frac{2x}{2x-1} & -1 \\
                                         1 & 0 & 0 \\
                                       \end{pmatrix},
$$
and the largest eigenvalue is
$$
\lambda^u(x)=\frac{1-2x+8x^2+\sqrt{-3+12x+4x^2-32x^3+64x^4}}{2(2x-1)}.
$$
Now we have
\begin{multline*}
    \frac{d}{dx}\lambda^u(x)|_{x=1}=0, \ \frac{d^2}{dx^2}\lambda^u(x)|_{x=1}=
    \\
    =\frac{8(-3(1+3\sqrt{5})+2(9+18\sqrt{5}+2(-27+3\sqrt{5}+2(55-12\sqrt{5}+4(-1+6\sqrt{5})))))}{135\sqrt{5}}=\\ =16.247987...>0
\end{multline*}
\end{proof}

In other words, Lemma \ref{l.muin} claims that $\mu_\lambda>\mu_0$ if $\lambda>0$ is small enough. Fix a small $\lambda>0$ and take any $\tilde\gamma\in (0, \frac{1}{2})$ such that $\mu_{\lambda}^{2\tilde\gamma}>\mu_0$. We claim that if $E_2 = \max \Sigma_\lambda$, $E_1 \in \Sigma_\lambda$, and $|E_1-E_2|$ is sufficiently small, then
$$
|N_{\lambda}(E_1)-N_{\lambda}(E_2)|\ge |E_1-E_2|^{\tilde\gamma}.
$$
Indeed, $\Psi(E_2)=2$, and the interval $a\subset \ell_0$ between the points $L_0(\Psi_{\lambda}(E_2))$ and $L_0(\Psi_{\lambda}(E_1))$ has one of its end points at the singularity $P_1=(1,1,1)$. Consider also the interval $b\subset \ell_\lambda$ between $L_\lambda(E_2)$ and $L_\lambda(E_1)$. As in the case (ii) above, consider $M$ iterates of $a$ and $b$, where $M$ is such that (\ref{e.M}) holds. Then, due to \cite[Proposition 3.14]{DG2}, we have
$$
|a|\sim_C \mu_0^{-M}, \ \ \ |b|\sim_C \mu_{\lambda}^{-M}.
$$
Using Lemma \ref{l.IDSLap}.(d), we find
\begin{multline*}
    |N_{\lambda}(E_1)-N_{\lambda}(E_2)|=|N_0(\Psi_{\lambda}(E_1))-N_0(\Psi_{\lambda}(E_2))|\ge \\ \ge  C_0|\Psi_\lambda(E_1)-\Psi_\lambda(E_2)|^{1/2} =2^{1/4}C_0|a|^{1/2}\ge C'\mu_0^{-\frac{M}{2}}=C'\left(\frac{\mu_{\lambda}^{M\tilde\gamma}}{\mu_0^{\frac{M}{2}}}\right)\mu_{\lambda}^{-M\tilde\gamma}\ge \\ \ge C''\left(\frac{\mu_{\lambda}^{2\tilde\gamma}}{\mu_0}\right)^{\frac{M}{2}}|b|^{\tilde\gamma}\ge C'''\left(\frac{\mu_{\lambda}^{2\tilde\gamma}}{\mu_0}\right)^{\frac{M}{2}}|E_2-E_1|^{\tilde\gamma}\ge |E_2-E_1|^{\tilde\gamma},
\end{multline*}
provided $M$ is large enough (i.e., if $|a|$ and $|b|$ are small enough, or, equivalently, if $|E_2-E_1|$ is small enough). This completes the proof of Theorem \ref{t.idsholdersmall}.
\end{proof}

Notice that as a byproduct of this proof, we also get the following statement:

\bprop\label{p.holderpsi}
The map $\Psi_{\lambda}:\Sigma_{\lambda}\to [0,2]$ given by \eqref{e.psi}
is H\"older continuous with a H\"older exponent $\gamma_{\lambda}$ that obeys $\gamma_{\lambda}\to \frac{1}{2}$ as $\lambda\to 0$.
\enprop

\brm
In terms of the dynamics of the trace map, there exists $\lambda_0 > 0$ such that for any $\lambda\in (0, \lambda_0)$, the semiconjugacy $\Phi_{\lambda} : \Lambda_{\lambda} \to \mathbb{S}$, $\Phi_{\lambda}\circ T_{\lambda}|_{\Lambda_{\lambda}}=T_0|_{\mathbb{S}}\circ \Phi_{\lambda}$, is H\"older continuous with a H\"older exponent $\gamma_{\lambda}$ such that $\gamma_{\lambda}\to \frac{1}{2}$ as $\lambda\to 0$. Since we are not using this statement here, we do not elaborate on it.

This is related to the following classical fact from hyperbolic dynamics. Suppose $\Lambda_f$ is a compact locally maximal hyperbolic set of a surface diffeomorphism $f:M^2\to M^2$, $g$ is $C^1$-close to $f$, and $\Lambda_g$ is a continuation of $\Lambda_f$. Then there is a continuous conjugacy $h:\Lambda_f\to \Lambda_g$, $h\circ f=g\circ h$, and $h$ has to be H\"older continuous with H\"older exponent close to one {\rm (}see \cite{KaH}, \cite{PV}{\rm )}. In our case, the H\"older exponent of the conjugacy $\Phi_\lambda$ is close to $1/2$, not to one, due to essentially different behavior of $T_0|_{\mathbb{S}}$ and $T_{\lambda}|_{\Lambda_\lambda}$ near the singularities of the Cayley cubic.
\erm

\end{document}